\documentclass[a4 paper, reqno]{amsart}
\usepackage{latexsym}
\usepackage
[
%a4paper,% other options: a3paper, a5paper, etc
%hmargin=1.5in
%right=2in,
% use vmargin=2cm to make vertical margins equal to 2cm.
% use hmargin=3cm to make horizontal margins equal to 3cm.
% use margin=3cm to make all margins equal to 3cm.
margin=3cm
]
{geometry}
\newcommand{\be}{\begin{equation}}
\newcommand{\ee}{\end{equation}}
\newcommand{\bea}{\begin{eqnarray}}
\newcommand{\eea}{\end{eqnarray}}
\newcommand{\bean}{\begin{eqnarray*}}
\newcommand{\eean}{\end{eqnarray*}}

\newcommand{\brray}{\begin{array}}
\newcommand{\erray}{\end{array}}
\newcommand{\ben}{\begin{equation}{nonumber}}
\newcommand{\een}{\end{equation}{nonumber}}

%\renewcommand{\theequation}{\thesection .\arabic{equation}}
%%%%%%%%%%%%%%%%%%%%%%%%%%%%%%%%%%%%%%%%%%%%%%%%%%%%%%%%%%%%%r
%%%%%%%%%%%% THEOREMS ET AL %%%%%%%%%%%%%%%%%%%%%%%%%%%%%%%%%
\newtheorem{defn}{Definition}[section]
\newtheorem{thm}[defn]{Theorem}
\newtheorem{lemma}[defn]{Lemma}
\newtheorem{prop}[defn]{Proposition}
\newtheorem{corr}[defn]{Corollary}
\newtheorem{xmpl}[defn]{Example}
\newtheorem{rmk}[defn]{Remark}
\newcommand{\bdfn}{\begin{defn}}
\newcommand{\bthm}{\begin{thm}}
\newcommand{\blmma}{\begin{lemma}}
\newcommand{\bppsn}{\begin{prop}}
\newcommand{\bcrlre}{\begin{corr}}
\newcommand{\bxmpl}{\begin{xmpl}}
\newcommand{\brmrk}{\begin{rmk}}
\newcommand{\edfn}{\end{defn}}
\newcommand{\ethm}{\end{thm}}
\newcommand{\elmma}{\end{lemma}}
\newcommand{\eppsn}{\end{prop}}
\newcommand{\ecrlre}{\end{corr}}
\newcommand{\exmpl}{\end{xmpl}}
\newcommand{\ermrk}{\end{rmk}}
%%%%%%%%%%%%%%%%%%%%%%%%%%%%%%%%%%%%%%%%%%%%%%%%%%%%%%%%%%%%

%%%%%%%%%%%%%%%% SPECIAL SYMBOLS %%%%%%%%%%%%%%%%%%%%%%%%%%%
%\newcommand{\cmath}{{\Bbb C}}

\newcommand{\IC}{\mathbb{C}}

\newcommand{\id}{\mathrm{id}}

\newcommand{\twoform}{{{\Omega}^2}( \mathcal{A} )}
\newcommand{\tensora}{\otimes_{\mathcal{A}}}

\newcommand{\tensorc}{\otimes_{\mathbb{C}}}
\newcommand{\A}{\mathcal{A}}

\newcommand{\E}{\mathcal{E}}

\newcommand{\zeroE}{{}_0\E}
\newcommand{\Ezero}{\E_0}

\newcommand{\Hom}{{\rm Hom}}

\newcommand{\deltam}{\Delta_M}

\newcommand{\mdelta}{{}_M \Delta}

\newcommand{\zeroM}{{}_0 M}
\newcommand{\Mzero}{M_0}

\newcommand{\bimodbicov}{{}^\mathcal{A}_\mathcal{A} \mathcal{M}^\mathcal{A}_\mathcal{A}}
\newcommand{\bimodrightcov}{{}_\mathcal{A} \mathcal{M}^\mathcal{A}_\mathcal{A}}
\newcommand{\bimodleftcov}{{}^\mathcal{A}_\mathcal{A} \mathcal{M}_\mathcal{A}}
\newcommand{\rightmodbicov}{{}^\mathcal{A} \mathcal{M}^\mathcal{A}_\mathcal{A}}
\newcommand{\leftcov}{{}^\mathcal{A} \mathcal{M}}
\newcommand{\bimodbicovgamma}{{}^{\mathcal{A}_\gamma}_{\mathcal{A}_\gamma} \mathcal{M}^{\mathcal{A}_\gamma}_{\mathcal{A}_\gamma}}

\newcommand{\leftcovgamma}{{}^{\mathcal{A}_\gamma} \mathcal{M}}
\newcommand{\ev}{\rm ev}
\newcommand{\coev}{\rm coev}

\newcommand{\RNum}[1]{\uppercase\expandafter{\romannumeral #1\relax}}

\begin{document}
\title{Pseudo-Riemannian metrics on bicovariant bimodules}
\maketitle
\begin{center}
{\large {Jyotishman Bhowmick and Sugato Mukhopadhyay}}\\
Indian Statistical Institute\\
203, B. T. Road, Kolkata 700108\\
Emails: jyotishmanb$@$gmail.com, m.xugato@gmail.com \\
\end{center}
\begin{abstract}
We study pseudo-Riemannian invariant metrics on bicovariant bimodules over Hopf algebras. We clarify some properties of such metrics and prove that pseudo-Riemannian invariant metrics on a bicovariant bimodule and its cocycle deformations are in one to one correspondence.
\end{abstract}

\section{Introduction}	

The notion of metrics on covariant bimodules on Hopf algebras have been studied by a number of authors including Heckenberger and~ Schm{\"u}dgen (\cite{heckenberger}, \cite{heckenbergerlaplace}, \cite{heckenbergerspin}) as well as Beggs, Majid and their collaborators (\cite{beggsmajidbook} and references therein). The goal of this article is to characterize bicovariant pseudo-Riemannian metrics on a cocycle-twisted bicovariant bimodule. As in \cite{heckenberger}, the symmetry of the metric comes from Woronowicz's braiding map $ \sigma $ on the bicovariant bimodule. However, since our notion of non-degeneracy of the metric is slightly weaker than that in \cite{heckenberger}, we consider a slightly larger class  of metrics than those in \cite{heckenberger}. The positive-definiteness of the metric does not play any role in what we do.

We refer to the later sections for the definitions of pseudo-Riemannian metrics and cocycle deformations. Our strategy is to exploit the covariance of the various maps between bicovariant bimodules to view them as maps between the finite-dimensional vector spaces of left-invariant elements of the respective bimodules. This was already observed and used crucially by Heckenberger and~ Schm{\"u}dgen in the paper \cite{heckenberger}. We prove that bi-invariant pseudo-Riemannian metrics are automatically bicovariant maps and compare our definition of pseudo-Riemannian metric with some of the other definitions available in the literature. Finally, we prove that the pseudo-Riemannian bi-invariant metrics on a bicovariant bimodule and its cocycle deformation are in one to one correspondence. These results will be used in the companion article \cite{article6}.

In Section \ref{section2}, we discuss some generalities on bicovariant bimodules. In Section \ref{section3}, we define and study pseudo-Riemannian left metrics on a bicovariant differential calculus.  Finally, in Section \ref{21staugust20197}, we prove our main result on bi-invariant metrics on cocycle-deformations.

Let us set up some notations and conventions that we are going to follow. All vector spaces will be assumed to be over the complex field. For vector spaces $ V_1 $ and $ V_2, $ $ \sigma^{{\rm can}} : V_1 \tensorc V_2 \rightarrow V_2 \tensorc V_1 $ will denote the canonical flip map, i.e, $ \sigma^{{\rm can}} (v_1 \tensorc v_2) = v_2 \tensorc v_1. $ For the rest of the article, $(\A,\Delta)$ will denote a Hopf algebra. We will use the Sweedler notation for the coproduct $\Delta$. Thus, we will write
\begin{equation} \label{28thaugust20191} 
\Delta(a) = a_{(1)} \tensorc a_{(2)}.
\end{equation}
For a right $\A$-module $V,$ the notation $V^*$ will stand for the set $ \Hom_{\A} ( V, \A ). $

Following \cite{woronowicz}, the comodule coaction on a left $\A$-comodule $V$ will be denoted by the symbol $ \Delta_V. $ Thus, $ \Delta_V $ is a $\IC$-linear map $\Delta_V:V \to \A \tensorc V$ such that
$$ (\Delta \tensorc \id) \Delta_V = (\id \tensorc \Delta_V) \Delta_V, ~ (\epsilon \tensorc \id)\Delta_V(v)=v $$
for all $ v $ in $ V $ (here $\epsilon$ is the counit of $\A$). We will use the notation
\begin{equation} \label{28thaugust20192}
\Delta_V(v) = v_{(-1)} \tensorc v_{(0)}.
\end{equation}
Similarly, the comodule coaction on a right $\A$-comodule will be denoted by $ {}_V \Delta $ and we will write 
\begin{equation} \label{28thaugust20193}
{}_V \Delta(v) = v_{(0)} \tensorc v_{(1)}.
\end{equation}	

Finally, for a Hopf algebra $\A,$ $ \bimodleftcov, \bimodrightcov, \bimodbicov $ will denote the categories of various types of mixed Hopf-bimodules as in Subsection 1.9 of \cite{montgomery}.

\section{Covariant bimodules on quantum groups} \label{section2}

In this section we recall and prove some basic facts on covariant bimodules. These objects were studied by many Hopf-algebraists (as Hopf-bimodules) including Abe (\cite{abe}) and Sweedler (\cite{sweedler}). During the 1980's, they were re-introduced by Woronowicz (\cite{woronowicz}) for studying differential calculi over Hopf algebras. Schauenburg (\cite{schauenberg}) proved a categorical equivalence between bicovariant bimodules and Yetter-Drinfeld modules over a Hopf algebra, the latter being introduced by Yetter in \cite{yetter}.

We start by recalling the notions on covariant bimodules from Section 2 of \cite{woronowicz}. Suppose $M$ is a bimodule over $\A$ such that $ (M, \Delta_M) $ is a left $\A$-comodule. Then $ (M, \Delta_M) $ is called a left-covariant bimodule if this tuplet is an object of the category $ \bimodleftcov, $ i.e, for all $a$ in $\A$ and $m$ in $M$, the following equation holds.
$$\Delta_M(a m)=\Delta(a)\Delta_M(m),~ \Delta_M(m a)=\Delta_M(m)\Delta(a).$$
Similarly, if $ {}_M \Delta $ is a right comodule coaction on $M,$ then $ (M, {}_M \Delta) $ is called a right covariant bimodule if it is an object of the category $ \bimodrightcov, $ i.e, for any $a$ in $\A$ and $m$ in $M$, 
$${}_ M\Delta(a m)=\Delta(a){}_ M\Delta(m),~ {}_ M\Delta(m a)={}_ M\Delta(m)\Delta(a).$$
Finally, let $ M$ be a bimodule over $\A$ and $\Delta_ M: M \to \A \tensorc M$ and ${}_ M \Delta: M \to M \tensorc \A$ be $\IC$-linear maps. Then we say that $(M, \Delta_{ M}, {}_ M \Delta)$ is a bicovariant bimodule if this triplet is an object of $\bimodbicov. $ Thus, 
\begin{itemize}
	\item[(i)] $(M, \Delta_{ M})$ is left-covariant bimodule,
	\item[(ii)] $(M, {}_ M \Delta)$ is a right-covariant bimodule,
	\item[(iii)] $ (\id \tensorc {}_M \Delta) \Delta_M = (\Delta_M \tensorc \id) {}_M\Delta. $
\end{itemize}

The vector space of left (respectively, right) invariant elements of a left (respectively, right) covariant bimodules will play a crucial role in the sequel and we introduce notations for them here.
\begin{defn} \label{21staugust20193}
	Let $(M, \Delta_M)$ be a left-covariant bimodule over $\A$. The subspace of left-invariant elements of $M$ is defined to be the vector space 
	$${}_0M:=\{m \in M : \Delta_M(m)=1\tensorc m\}.$$
	Similarly, if $(M, {}_M\Delta)$ is a right-covariant bimodule over $\A$ , the subspace of right-invariant elements of $M$ is the vector space 
	$$M_0:=\{m \in M : {}_M\Delta(m) = m \tensorc 1\}.$$
\end{defn}

\brmrk \label{20thjune}
We will say that a bicovariant bimodule $( M, \Delta_M, {}_M \Delta ) $ is finite if $ {}_0 M $ is a finite dimensional vector space. Throughout this article, we will only work with bicovariant bimodules which are finite. 
\ermrk

Let us note the immediate consequences of the above definitions.
\blmma \label{1staugust2019jb1} (Theorem 2.4, \cite{woronowicz} )
Suppose $ M $ is a bicovariant $\A$-$\A$-bimodule. Then
$$ \mdelta (\zeroM) \subseteq \zeroM \tensorc \A,~ \deltam (\Mzero) \subseteq \A \tensorc \Mzero. $$
More precisely, if $ \{ m_i \}_i $ is a basis of $ \zeroM, $ then there exist elements $ \{ a_{ji} \}_{i,j} $ in $ \A $ such that
\begin{equation} \label{26thaugust20191} \mdelta (m_i) = \sum_j m_j \tensorc a_{ji}. \end{equation}
\elmma
\begin{proof} This is a simple consequence of the fact that $ \mdelta $ commutes with $ \deltam. $ 
\end{proof}

The category $\bimodleftcov$ has a natural monoidal structure. Indeed, if
$ (M, \Delta_M) $ and $ (N, \Delta_N) $ are left-covariant bimodules over $ \A, $ then we have a left coaction $ \Delta_{M \tensora N} $ of $ \A $ on $ M \tensora N $ defined by the following formula:
$$ \Delta_{M \tensora N} (m \tensora n) = m_{(-1)} n_{(-1)} \tensorc m_{(0)} \tensora n_{(0)}. $$
Here, we have made use of the Sweedler notation introduced in \eqref{28thaugust20192}. This makes $ M \tensora N $ into a left covariant $\A$-$\A$-bimodule. Similarly, there is a right coaction $ {}_{M \tensora N} \Delta $ on $ M \tensora N $ if $ (M, {}_M \Delta) $ and $ (N, {}_N \Delta)$ are right covariant bimodules. 

The fundamental theorem of Hopf modules (Theorem 1.9.4 of \cite{montgomery}) states that if $ V $ is a left-covariant bimodule over $\A,$ then $ V $ is a free as a left (as well as a right) $ \A$-module. This was reproved by Woronowicz in \cite{woronowicz}. In fact, one has the following result:

\begin{prop}{(Theorem 2.1 and Theorem 2.3 of \cite{woronowicz})} \label{moduleiso}
	Let $(M, \Delta_M)$ be a bicovariant bimodule over $\A$. Then the multiplication maps ${}_0M \tensorc \A \to M,$ $ \A \tensorc {}_0M \to M,$ $ M_0 \tensorc \A \to M$ and $ \A \tensorc M_0 \to M$ are isomorphisms.
\end{prop}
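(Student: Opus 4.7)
The plan is to apply the Fundamental Theorem of Hopf Modules (Theorem 1.9.4 of \cite{montgomery}) to each of the four multiplication maps. I would carry out the argument in detail for $\mu : \A \tensorc \zeroM \to M,$ $a \tensorc m \mapsto a m,$ and indicate how the remaining three cases reduce to analogous arguments.

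First, observe that $(M, \Delta_M)$ together with the left $\A$-action coming from the bimodule structure is a left-module left-comodule Hopf module over $\A,$ since the Hopf module compatibility $\Delta_M(am) = \Delta(a) \Delta_M(m)$ is precisely the left-covariance hypothesis. The set of coinvariants for this Hopf module structure is exactly $\zeroM$ by Definition \ref{21staugust20193}. The Fundamental Theorem of Hopf Modules then yields that $\mu$ is an isomorphism of left Hopf modules.

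An explicit inverse can be written down using the projection $P : M \to \zeroM$ defined by $P(m) := S(m_{(-1)}) m_{(0)},$ where $S$ is the antipode of $\A$ and we use the left $\A$-action on $M.$ To verify $P(m) \in \zeroM,$ one expands
\[\Delta_M(P(m)) = \Delta(S(m_{(-1)})) \Delta_M(m_{(0)}) = S(m_{(-1)(2)}) m_{(0)(-1)} \tensorc S(m_{(-1)(1)}) m_{(0)(0)},\]
re-indexes the iterated coaction via the coassociativity identity $m_{(-1)(1)} \tensorc m_{(-1)(2)} \tensorc m_{(0)} = m_{(-1)} \tensorc m_{(0)(-1)} \tensorc m_{(0)(0)},$ and applies the antipode axiom $S(a_{(1)}) a_{(2)} = \epsilon(a) 1$ together with the counit axiom $\epsilon(m_{(-1)}) m_{(0)} = m$ to collapse the right-hand side to $1 \tensorc P(m).$ The inverse of $\mu$ is then $\nu(m) := m_{(-1)} \tensorc P(m_{(0)}),$ and the identities $\mu \circ \nu = \id_M$ and $\nu \circ \mu = \id$ follow from short Sweedler computations (for the second, use that $\Delta_M(am) = a_{(1)} \tensorc a_{(2)} m$ whenever $m \in \zeroM$).

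The remaining three multiplication maps are handled by symmetric arguments. For $\zeroM \tensorc \A \to M,$ one uses the right-module left-comodule Hopf module structure on $M$ given by the compatibility $\Delta_M(ma) = \Delta_M(m) \Delta(a)$ (with coinvariants still $\zeroM$); for the two maps involving $\Mzero,$ one works with the right coaction $\mdelta,$ which is compatible by bicovariance with both the left and right $\A$-actions and whose coinvariants are $\Mzero.$ In each case the Fundamental Theorem of Hopf Modules applies and an explicit projection-based inverse can be written down along the same lines as above. The main technical effort is the Sweedler-index bookkeeping, particularly in the mixed-handedness cases (right action paired with left coaction, or left action paired with right coaction), where one must choose the correct projection formula; however, there is no conceptual difficulty beyond FTHM.
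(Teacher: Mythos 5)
The paper gives no proof of this proposition at all --- it is quoted from Woronowicz (Theorems 2.1 and 2.3) together with the fundamental theorem of Hopf modules (Theorem 1.9.4 of Montgomery) --- and your FTHM-plus-explicit-projection strategy is exactly the standard argument behind those citations. Your detailed case is correct: for $\A \tensorc \zeroM \to M$ the projection $P(m)=S(m_{(-1)})m_{(0)}$ and the inverse $m\mapsto m_{(-1)}\tensorc P(m_{(0)})$ work as you describe, and the case $\Mzero\tensorc\A\to M$ is likewise a literal instance of the fundamental theorem (with projection $m\mapsto m_{(0)}S(m_{(1)})$), since left-left and right-right Hopf modules over $\A$ are right-right Hopf modules over $\A^{\mathrm{op},\mathrm{cop}}$, which is a Hopf algebra with the same antipode.

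The gap is in the claim that the two mixed-handedness maps follow ``along the same lines.'' For $\zeroM\tensorc\A\to M$ (left coaction paired with the \emph{right} action) the analogue of your projection would be $m\mapsto m_{(0)}S(m_{(-1)})$, and checking that this lands in $\zeroM$, or that it inverts multiplication, now requires the identity $a_{(2)}S(a_{(1)})=\epsilon(a)1$, which fails for a general Hopf algebra. The correct formulas are $m\mapsto m_{(0)}S^{-1}(m_{(-1)})$ here and $m\mapsto S^{-1}(m_{(1)})m_{(0)}$ for $\A\tensorc\Mzero\to M$; equivalently, these two cases are the fundamental theorem applied to $\A^{\mathrm{cop}}$ (respectively $\A^{\mathrm{op}}$), whose antipode is $S^{-1}$, so they need the antipode to be bijective --- they are \emph{not} formal consequences of the FTHM for $\A$. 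Bicovariance by itself does not remove the issue: identifying $M\cong\A\tensorc\zeroM$ via the left action, the map $\zeroM\tensorc\A\to M$ becomes the Galois-type map $\omega\tensorc a\mapsto a_{(1)}\tensorc S(a_{(2)})\,\omega\, a_{(3)}$, whose inversion again calls for $S^{-1}$. So to complete your proof you should either invoke bijectivity of $S$ explicitly (automatic in Woronowicz's compact-quantum-group setting, which is what the cited theorems cover, and harmless for the deformations used later in the paper) or give a separate argument for the two mixed cases; as written, those two cases are asserted rather than proved, and the ``same lines as above'' formula one would naturally write down is the one that fails.
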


\begin{corr} \label{3rdaugust20191}
	Let $(M,\Delta_M)$ and $(N, \Delta_N)$ be left-covariant bimodules over $\A$ and $\{m_i\}_i$ and $\{n_j\}_j$ be vector space bases of ${}_0 M$ and ${}_0 N$ respectively. Then each element of $M \tensora N$ can be written as $\sum_{ij} a_{ij} m_i \tensora n_j$ and $\sum_{ij} m_i \tensora n_j b_{ij}$, where $a_{ij}$ and $b_{ij}$ are uniquely determined.\\
	A similar result holds for right-covariant bimodules $(M, {}_M \Delta)$ and $(N, {}_N \Delta)$ over $\A$. Finally, if $(M,\Delta_M)$ is a left-covariant bimodule over $\A$ with basis $\{m_i\}_i$ of ${}_0 M$, and $(N, {}_N \Delta)$ is a right-covariant bimodule over $\A$ with basis $\{n_j\}_j$ of $N_0$, then any element of $M \tensora N$ can be written uniquely as $\sum_{ij} a_{ij} m_i \tensora n_j$ as well as $\sum_{ij} m_i \tensora n_j b_{ij}$.
\end{corr}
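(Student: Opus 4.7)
The plan is to establish that, for the first setup (both $M$ and $N$ left-covariant), the family $\{m_i \tensora n_j\}_{i,j}$ is simultaneously a free basis of $M \tensora N$ as a left $\A$-module and as a right $\A$-module, which yields the two claimed forms. Proposition~\ref{moduleiso} supplies the underlying fact I need: because $M$ and $N$ are left-covariant, the multiplication maps produce left-$\A$-module isomorphisms $\A \tensorc {}_0 M \cong M$ and $\A \tensorc {}_0 N \cong N$ as well as the corresponding right-$\A$-module isomorphisms, so $\{m_i\}$ is a free $\A$-basis of $M$ on either side, and likewise $\{n_j\}$ in $N$.

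For existence of the first form I would take an arbitrary elementary tensor $p \tensora q$, expand $q = \sum_j b_j n_j$ using the left-$\A$-basis of $N$, push the scalars across to get $p \tensora q = \sum_j (p b_j) \tensora n_j$, and then expand each $p b_j \in M$ as $\sum_i a^j_i m_i$ in the left-$\A$-basis of $M$, producing $p \tensora q = \sum_{i,j} a^j_i \, m_i \tensora n_j$. By linearity, every element of $M \tensora N$ admits such an expansion. For uniqueness I would use the left-$\A$-module isomorphism $N \cong \A^{|J|}$ determined by $\{n_j\}$; this induces an isomorphism of left $\A$-modules $M \tensora N \cong M \tensora \A^{|J|} \cong M^{|J|}$ sending $m \tensora n_j$ to $m$ placed in the $j$-th coordinate. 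Consequently $\sum_{ij} a_{ij} m_i \tensora n_j$ maps to the tuple $\bigl(\sum_i a_{ij} m_i\bigr)_j$; if the original sum vanishes, each $\sum_i a_{ij} m_i$ vanishes, and freeness of $\{m_i\}$ as a left-$\A$-basis of $M$ then forces $a_{ij}=0$. The second form $\sum_{ij} m_i \tensora n_j b_{ij}$ is obtained by the mirror argument using the right-$\A$-module versions of all the same isomorphisms.

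The remaining two assertions---$(M,{}_M\Delta)$ and $(N,{}_N\Delta)$ both right-covariant, and the mixed case---are handled in exactly the same way: Proposition~\ref{moduleiso} again supplies the required one- and two-sided $\A$-module isomorphisms, now constructed from bases of $M_0$ and/or $N_0$, and the same \emph{expand, push across, expand again} recipe applies verbatim. I do not anticipate any serious obstacle; the only point that warrants care is keeping track of which multiplication isomorphism (left versus right) is being invoked at each stage so that the scalars end up on the intended side of the tensor.
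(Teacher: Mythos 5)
Your argument is correct and is essentially the proof the paper intends: it omits the details, citing an adaptation of Lemma 3.2 of Woronowicz, which rests on exactly the freeness of $M$ and $N$ over $\A$ with the invariant bases $\{m_i\}$, $\{n_j\}$ that you use for both existence and (via $M\tensora N\cong M^{|J|}$, resp.\ $N^{|I|}$) uniqueness. The only cosmetic point is that Proposition \ref{moduleiso} is stated for bicovariant bimodules while the corollary assumes only left- (or right-) covariance; the freeness you need is nonetheless valid in that generality by the fundamental theorem of Hopf modules (Woronowicz's Theorems 2.1 and 2.3), as the paper notes immediately before Proposition \ref{moduleiso}.
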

\begin{proof}
	The proof of this result is an adaptation of Lemma 3.2 of \cite{woronowicz} and we omit it.
\end{proof}	

The next proposition will require the definition of right Yetter-Drinfeld modules for which we refer to \cite{yetter} and Definition 4.1 of \cite{schauenberg}.

\begin{prop} \label{3rdaugust20192} (Theorem 5.7 of \cite{schauenberg})
	The functor $ M \mapsto {}_0 M $ induces a monoidal equivalence of categories $ \bimodbicov $ and the category of right Yetter-Drinfeld modules. Therefore, if	 $(M,\Delta_M)$ and $(N, \Delta_M)$ be left-covariant bimodules over $\A,$ then
	\begin{equation} \label{21staugust20194}
	{}_0 (M \tensora N) = {\rm span}_\IC \{m \tensora n : m \in {}_0 M, n \in {}_0 N \}.
	\end{equation}
	Similarly, if $(M, {}_M \Delta)$ and $(N, {}_N \Delta)$ are right-covariant bimodules over $\A$, then we have that $$ (M \tensora N)_0 = {\rm span}_\IC \{m \tensora n : m \in M_0, n \in N_0 \}.$$
	Thus, ${}_0 (M \tensora N) = {}_0 M \tensorc {}_0 N$ and $(M \tensora N)_0 = M_0 \tensorc N_0$.
\end{prop}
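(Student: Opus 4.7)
The first assertion (the monoidal equivalence of $\bimodbicov$ with the category of right Yetter-Drinfeld modules) is precisely Theorem 5.7 of \cite{schauenberg}, which I would quote as a black box. What remains is to verify the explicit formulas for left- and right-invariant elements; since the two cases are strictly symmetric, I would write out only the left-invariant one.

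The easy inclusion is a direct computation: for $m \in {}_0 M$ and $n \in {}_0 N$,
$$\Delta_{M \tensora N}(m \tensora n) = m_{(-1)} n_{(-1)} \tensorc m_{(0)} \tensora n_{(0)} = 1 \tensorc (m \tensora n),$$
so $m \tensora n \in {}_0(M \tensora N)$, giving the inclusion ${\rm span}_\IC\{m \tensora n : m \in {}_0 M, n \in {}_0 N\} \subseteq {}_0(M \tensora N)$.

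For the reverse inclusion, I would fix bases $\{m_i\}_i$ of ${}_0 M$ and $\{n_j\}_j$ of ${}_0 N$ and apply Corollary \ref{3rdaugust20191} to write any $x \in {}_0(M \tensora N)$ uniquely as $x = \sum_{ij} a_{ij}(m_i \tensora n_j)$ with $a_{ij} \in \A$. Using left-covariance $\Delta_M(ab) = \Delta(a)\Delta_M(b)$ together with $m_i \in {}_0 M$ and $n_j \in {}_0 N$, the left-invariance condition $\Delta_{M \tensora N}(x) = 1 \tensorc x$ reduces to the equation
$$\sum_{ij}(a_{ij})_{(1)} \tensorc (a_{ij})_{(2)}(m_i \tensora n_j) = \sum_{ij} 1 \tensorc a_{ij}(m_i \tensora n_j)$$
inside $\A \tensorc (M \tensora N)$. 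Transporting through the isomorphism $\A \tensorc (M \tensora N) \cong \A \tensorc \A \tensorc_\IC ({}_0 M \tensorc {}_0 N)$ that Corollary \ref{3rdaugust20191} provides, and matching coefficients of each basis element $m_i \tensorc n_j$, yields $\Delta(a_{ij}) = 1 \tensorc a_{ij}$. Applying $\id \tensorc \epsilon$ then forces $a_{ij} = \epsilon(a_{ij}) \cdot 1 \in \IC$, so $x \in {\rm span}_\IC\{m_i \tensora n_j\}$, finishing the inclusion.

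The main obstacle is exactly this coefficient-matching step inside $\A \tensorc (M \tensora N)$: it relies crucially on the freeness of $M \tensora N$ as a left $\A$-module with explicit basis $\{m_i \tensora n_j\}$, which is furnished by Proposition \ref{moduleiso} and Corollary \ref{3rdaugust20191}. Once this is in place, the final identification ${}_0(M \tensora N) = {}_0 M \tensorc {}_0 N$ becomes automatic: the canonical map ${}_0 M \tensorc {}_0 N \to M \tensora N$, $m \tensorc n \mapsto m \tensora n$, sends the $\IC$-basis $\{m_i \tensorc n_j\}$ to the set $\{m_i \tensora n_j\}$, which is $\IC$-linearly independent (being an $\A$-basis), so the map is injective with image exactly ${}_0(M \tensora N)$.
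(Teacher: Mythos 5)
Your proposal is correct, but it follows a different route from the paper, which offers no computational proof at all: it simply cites Theorem 5.7 of \cite{schauenberg} and reads off \eqref{21staugust20194} from the fact that the equivalence $M \mapsto {}_0 M$ is \emph{monoidal} (so that ${}_0(M \tensora N)$ is identified with ${}_0 M \tensorc {}_0 N$ by the very structure of the equivalence). You instead black-box only the categorical statement and verify the invariant-element formulas by hand: the inclusion ${\rm span}_\IC\{m \tensora n\} \subseteq {}_0(M \tensora N)$ is the same one-line computation anyone would write, and for the converse you expand $x = \sum_{ij} a_{ij}(m_i \tensora n_j)$ via Corollary \ref{3rdaugust20191}, use left covariance of $\Delta_{M \tensora N}$ and invariance of the $m_i \tensora n_j$ to get $\sum_{ij}(a_{ij})_{(1)} \tensorc (a_{ij})_{(2)}(m_i \tensora n_j) = \sum_{ij} 1 \tensorc a_{ij}(m_i \tensora n_j)$, and then match coefficients using the freeness of $M \tensora N$ as a left $\A$-module on the basis $\{m_i \tensora n_j\}$, concluding $\Delta(a_{ij}) = 1 \tensorc a_{ij}$ and hence, after applying $\id \tensorc \epsilon$, $a_{ij} \in \IC.1$. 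All of these steps are sound (the coefficient matching is legitimate precisely because $\A \tensorc (M \tensora N) = \bigoplus_{ij} \A \tensorc \A(m_i \tensora n_j)$, which is what the uniqueness clause of Corollary \ref{3rdaugust20191} provides), and your closing observation that $\IC$-linear independence of $\{m_i \tensora n_j\}$ gives the identification ${}_0(M \tensora N) = {}_0 M \tensorc {}_0 N$ is fine. What your approach buys is a self-contained, elementary proof of \eqref{21staugust20194} relying only on Woronowicz-style freeness (Proposition \ref{moduleiso}), independent of Schauenburg's equivalence; what the paper's citation buys is brevity and the full monoidal statement in one stroke, which your argument does not reprove but only supplements.
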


\begin{rmk} \label{29thjune20191}
	In the light of Proposition \ref{3rdaugust20192}, we are allowed to use the notations $ {}_{0} M \tensorc {}_{0} N $ and $ {}_{0} (M \tensora N) $ interchangeably.
\end{rmk}

We recall now the definition of covariant maps between bimodules.

\begin{defn}
	Let $(M, \Delta_{ M}, {}_M \Delta)$ and $(N, \Delta_N, {}_N \Delta)$ be bicovariant $\A$-bimodules and $ T $ be a $ \mathbb{C} $-linear map from $ M $ to $ N. $
	
	$ T $ is called left-covariant if $T$ is a morphism in the category $\leftcov,$ i.e, for all $ m \in M, $
	$$ (\id \tensorc T)(\Delta_{M}(m))=\Delta_N(T(m)). $$
	$T$ is called right-covariant if $T$ is a morphism in the category $\mathcal{M}^\A.$ Thus, for all $ m \in M,$
	$$ ( T \tensorc \id) {}_M \Delta (m) = {}_N \Delta (T (m)). $$
	Finally, a map which is both left and right covariant will be called a bicovariant map. In other words, a bicovariant map is a morphism in the category ${}^\A \mathcal{M}^\A.$
\end{defn}

We end this section by recalling the following fundamental result of Woronowicz.

\begin{prop}{(Proposition 3.1 of \cite{woronowicz})} \label{4thmay20193}
	Given a bicovariant bimodule $\E$ there exists a unique bimodule homomorphism
	$$\sigma: \E \tensora \E \to \E \tensora \E ~ {\rm such} ~ {\rm that} $$ 
	\be \label{30thapril20191} \sigma(\omega \tensora \eta)= \eta \tensora \omega \ee
	for any left-invariant element $\omega$ and right-invariant element $\eta$ in $\E$. $\sigma$ is invertible and is a bicovariant $\A$-bimodule map from $\E \tensora \E$ to itself. Moreover, $\sigma$ satisfies the following braid equation on $\E \tensora \E \tensora \E:$
	$$ (\id \tensora \sigma)(\sigma \tensora \id)(\id \tensora \sigma)= (\sigma \tensora \id)(\id \tensora \sigma)(\sigma \tensora \id). $$
\end{prop}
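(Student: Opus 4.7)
The plan is to exploit Corollary \ref{3rdaugust20191}: fixing a basis $\{m_i\}$ of $\zeroE$ and a basis $\{n_j\}$ of $\Ezero$, every element of $\E \tensora \E$ admits two unique presentations $\sum_{ij} a_{ij}\, m_i \tensora n_j$ and $\sum_{ij} m_i \tensora n_j\, b_{ij}$ with $a_{ij}, b_{ij} \in \A$. Uniqueness of $\sigma$ is then immediate from the first presentation: any $\A$-bimodule endomorphism sending $m_i \tensora n_j$ to $n_j \tensora m_i$ is forced to satisfy $\sigma\bigl(\sum_{ij} a_{ij}\, m_i \tensora n_j\bigr) = \sum_{ij} a_{ij}\, n_j \tensora m_i$.

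For existence I would take this last formula as the definition of $\sigma$. Well-definedness and left $\A$-linearity are automatic, since left multiplication by $a \in \A$ merely multiplies each $a_{ij}$ on the left. The substantive point is right $\A$-linearity, which I would establish by showing that the parallel right-handed prescription $\sigma\bigl(\sum_{ij} m_i \tensora n_j\, b_{ij}\bigr) = \sum_{ij} n_j \tensora m_i\, b_{ij}$ defines the same map; right-linearity is then built into this second form. Agreement of the two prescriptions reduces to expanding $n_j \cdot a$ in the basis $\{n_k\}$ via Proposition \ref{moduleiso} and tracking the induced change of coefficients, making essential use of the uniqueness of both presentations in Corollary \ref{3rdaugust20191}.

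Bicovariance is then easy: by Proposition \ref{3rdaugust20192} the elements $m_i \tensora n_j$ and their images $n_j \tensora m_i$ are both bi-invariant in $\E \tensora \E$, so $\sigma$ carries bi-invariants to bi-invariants; combined with $\A$-bimodularity and the fact that the two coactions on $\E \tensora \E$ are determined by their values on bi-invariant generators, $\sigma$ is both left- and right-covariant. Invertibility is obtained by the mirror construction: define $\tau$ on $\E \tensora \E$ by $\tau\bigl(\sum_{ji} c_{ji}\, n_j \tensora m_i\bigr) = \sum_{ji} c_{ji}\, m_i \tensora n_j$, using the variant of Corollary \ref{3rdaugust20191} obtained by interchanging the roles of $\Ezero$ and $\zeroE$, and check $\sigma \tau = \tau \sigma = \id$ on the generators $m_i \tensora n_j$.

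The main obstacle is the braid equation on $\E \tensora \E \tensora \E$. By Proposition \ref{3rdaugust20192} this triple tensor product is freely generated as a right $\A$-module by $\zeroE \tensorc \zeroE \tensorc \zeroE$, so $\A$-bimodularity of all three maps involved reduces the verification to triply left-invariant tensors $\omega \tensorc \omega' \tensorc \omega''$. The difficulty is that $\sigma$ is defined on pairs of mixed invariance type, so each application first requires converting a left-invariant factor into the right-invariant basis via Lemma \ref{1staugust2019jb1}, producing sums weighted by the structure constants $a_{ji}$; both sides of the braid identity then expand into sums over these constants, and their equality reduces to the coaction compatibility $(\id \tensorc \Edelta)\, \deltaE = (\deltaE \tensorc \id)\, \Edelta$ that is built into the definition of a bicovariant bimodule. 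Alternatively, once bicovariance of $\sigma$ is in hand, one may invoke the monoidal equivalence in Proposition \ref{3rdaugust20192} to identify $\sigma$ with the canonical braiding of the corresponding right Yetter-Drinfeld module, for which the braid equation is automatic.
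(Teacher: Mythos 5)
The paper itself offers no proof of this proposition: it is imported verbatim from Proposition 3.1 of \cite{woronowicz}, so there is nothing internal to compare your argument with, and it must be judged on its own terms. Your overall plan (define $\sigma$ through the unique presentation $\sum_{ij} a_{ij}\, m_i \tensora n_j$ of Corollary \ref{3rdaugust20191}, obtain right $\A$-linearity by matching the left- and right-handed prescriptions, reduce the braid identity to invariant elements) is the standard one and can be completed, but one step as written is wrong: the elements $m_i \tensora n_j$ and $n_j \tensora m_i$ are in general \emph{not} bi-invariant in $\E \tensora \E$. Since $n_j$ lies in $\Ezero$ but typically not in $\zeroE$, one has $\Delta_{\E \tensora \E}(m_i \tensora n_j) = (n_j)_{(-1)} \tensorc m_i \tensora (n_j)_{(0)}$, which is not $1 \tensorc m_i \tensora n_j$ unless $n_j$ happens to be left-invariant; Proposition \ref{3rdaugust20192} identifies ${}_0(\E \tensora \E)$ with $\zeroE \tensorc \zeroE$ and says nothing about such mixed tensors. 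So your covariance argument collapses as stated. The repair is Lemma \ref{1staugust2019jb1}: $\Delta_\E(\Ezero) \subseteq \A \tensorc \Ezero$, so writing $\Delta_\E(n_j) = \sum_l b_{lj} \tensorc n_l$ one checks directly that $(\id \tensorc \sigma)\Delta_{\E \tensora \E}(m_i \tensora n_j) = \sum_l b_{lj} \tensorc n_l \tensora m_i = \Delta_{\E \tensora \E}(n_j \tensora m_i)$, and then extends to all of $\E \tensora \E$ using the left-coefficient presentation, left $\A$-linearity of $\sigma$ and $\Delta_{\E \tensora \E}(a\,x)=\Delta(a)\Delta_{\E \tensora \E}(x)$; right covariance is symmetric.

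Two further steps are thinner than your wording suggests. For right $\A$-linearity it is not enough to expand $n_j a$ over $\{n_k\}$: to pass between the two presentations you must move the coefficient past \emph{both} factors, and the identity you actually need is that the two commutation operations (past $m_i$ and past $n_j$) commute with each other. This is not a formal consequence of freeness (Proposition \ref{moduleiso}); it holds because left, respectively right, covariance forces the commutation rules into convolution form, $\omega_i c = \sum_k c_{(1)} f_{ik}(c_{(2)})\, \omega_k$ and $\eta_j b = \sum_l g_{jl}(b_{(1)})\, b_{(2)}\, \eta_l$ with scalar-valued functionals $f_{ik}, g_{jl}$, so that either order of commutation produces the same coefficient $g_{jl}(b_{(1)})\, b_{(2)}\, f_{ik}(b_{(3)})$. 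Finally, for the braid equation the reduction to $\zeroE \tensorc \zeroE \tensorc \zeroE$ is fine, but asserting that both expansions coincide ``by the coaction compatibility'' is not an argument: you would need to carry out Woronowicz's computation with the matrix of \eqref{26thaugust20191} and these functionals, or, if you take the Yetter--Drinfeld alternative, actually prove that under the equivalence of Proposition \ref{3rdaugust20192} the categorical braiding corresponds to the map determined by \eqref{30thapril20191} --- that identification, not the braid axiom in the Yetter--Drinfeld category, is the substantive point.
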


\section{Pseudo-Riemannian metrics on bicovariant bimodules} \label{section3}

In this section, we study pseudo-Riemannian metrics on bicovariant differential calculus on Hopf algebras. After defining pseudo-Riemannian metrics, we recall the definitions of left and right invariance of a pseudo-Riemannian metrics from \cite{heckenberger}. We prove that a pseudo-Riemannian metric is left (respectively, right) invariant if and only if it is left (respectively, right) covariant. The coefficients of a left-invariant pseudo-Riemannian metric with respect to a left-invariant basis of $\E$ are scalars. We use this fact to clarify some properties of pseudo-Riemannian invariant metrics. We end the section by comparing our definition with those by Heckenberger and~ Schm{\"u}dgen (\cite{heckenberger}) as well as by Beggs and Majid.

\begin{defn} \label{24thmay20191} (\cite{heckenberger})
	Suppose $ \E $ is a bicovariant $\A$-bimodule $\E$ and $ \sigma: \E \tensora \E \rightarrow \E \tensora \E $ be the map as in Proposition \ref{4thmay20193}. A pseudo-Riemannian metric for the pair $ (\E, \sigma) $ is a right $\A$-linear map $g:\E \tensora \E \to \A$ such that the following conditions hold:
	\begin{enumerate}
		\item[(i)] $ g \circ \sigma = g. $
		\item[(ii)] If $g(\rho \tensora \nu)=0$ for all $\nu$ in $\E,$ then $\rho = 0.$ 
	\end{enumerate}
\end{defn}

For other notions of metrics on covariant differential calculus, we refer to \cite{beggsmajidbook} and references therein.

\begin{defn} (\cite{heckenberger})
	A pseudo-Riemannian metric $g$ on a bicovariant $\A$-bimodule $\E$ is said to be left-invariant if for all $\rho, \nu$ in $\E$,
	$$ (\id \tensorc \epsilon g)(\Delta_{(\E \tensora \E)}(\rho \tensora \nu)) = g(\rho \tensora \nu). $$
	Similarly, a pseudo-Riemannian metric $g$ on a bicovariant $\A$-bimodule $\E$ is said to be right-invariant if for all $\rho, \nu$ in $\E$,
	$$ (\epsilon g \tensorc \id)({}_{(\E \tensora \E)}\Delta(\rho \tensora \nu)) = g(\rho \tensora \nu). $$
	Finally, a pseudo-Riemannian metric $g$ on a bicovariant $\A$-$\A$ bimodule $\E$ is said to be bi-invariant if it is both left-invariant as well as right-invariant.
\end{defn}

We observe that a pseudo-Riemannian metric is invariant if and only if it is covariant.

\begin{prop} \label{29thaugust20192}
	Let $g$ be a pseudo-Riemannian metric on the bicovariant bimodule $\E$. Then $g$ is left-invariant if and only if $g: \E \tensora \E \to \A$ is a left-covariant map. Similarly, $g$ is right-invariant if and only if $g: \E \tensora \E \to \A$ is a right-covariant map.
\end{prop}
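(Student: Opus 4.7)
The plan is to exploit Corollary \ref{3rdaugust20191}---the unique expansions of elements of $\E \tensora \E$ in terms of a basis of left- or right-invariant elements---together with the fact that $g$ is right $\A$-linear. I will treat left invariance versus left covariance; the right case is the mirror argument, obtained by replacing a basis of ${}_0 \E$ by one of $\E_0$ and switching from $\Delta_{\E \tensora \E}$ to ${}_{\E \tensora \E}\Delta$.

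One direction is essentially formal: if $g$ is left-covariant, so that $(\id \tensorc g)\Delta_{\E \tensora \E} = \Delta \circ g$, then applying $\id \tensorc \epsilon$ and using the counit axiom $(\id \tensorc \epsilon)\Delta = \id$ yields the left-invariance condition at once.

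For the converse, fix a basis $\{e_i\}_i$ of ${}_0 \E$. Evaluating the invariance hypothesis at $\rho = e_i$, $\nu = e_j$---where $\Delta_{\E \tensora \E}(e_i \tensora e_j) = 1 \tensorc (e_i \tensora e_j)$---gives $\epsilon(g(e_i \tensora e_j)) \cdot 1 = g(e_i \tensora e_j)$, so each $c_{ij} := g(e_i \tensora e_j)$ is a scalar; this is the ``coefficients are scalars'' observation flagged in the introduction to the section. By Corollary \ref{3rdaugust20191} every $\xi \in \E \tensora \E$ admits a unique right-coefficient expansion $\xi = \sum_{ij} e_i \tensora e_j b_{ij}$ with $b_{ij} \in \A$, and right $\A$-linearity of $g$ then gives $g(\xi) = \sum_{ij} c_{ij} b_{ij}$. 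Using $\Delta_\E(e_i) = 1 \tensorc e_i$ together with $\Delta_\E(e_j b) = (1 \tensorc e_j)\Delta(b) = b_{(1)} \tensorc e_j b_{(2)}$, the left coaction on $\E \tensora \E$ evaluates to
\[ \Delta_{\E \tensora \E}(\xi) = \sum_{ij} b_{ij(1)} \tensorc (e_i \tensora e_j b_{ij(2)}), \]
and applying $\id \tensorc g$ together with right $\A$-linearity yields $\sum_{ij} c_{ij} b_{ij(1)} \tensorc b_{ij(2)} = \Delta\bigl(\sum_{ij} c_{ij} b_{ij}\bigr) = \Delta(g(\xi))$, which is precisely left-covariance at $\xi$.

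The one subtle choice is to use the right-coefficient expansion $\xi = \sum e_i \tensora e_j b_{ij}$ from Corollary \ref{3rdaugust20191} rather than the alternative $\sum a_{ij}\, e_i \tensora e_j$: the former interacts cleanly with the right $\A$-linearity of $g$, while the latter would demand a left linearity that is not hypothesized. Once this choice is made the calculation is direct, and the right-invariance case goes through by the symmetric argument---basis of $\E_0$, right coaction, same right-coefficient expansion---so I do not foresee any further obstacle beyond this bookkeeping.
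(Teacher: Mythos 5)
Your proposal is correct, but the nontrivial direction is argued differently from the paper. For ``invariant $\Rightarrow$ covariant'' the paper gives a basis-free Sweedler-notation computation: it substitutes the invariance identity into $\Delta g(\rho \tensora \nu)$, expands $\Delta(\rho_{(-1)}\nu_{(-1)})$ multiplicatively, uses coassociativity of the comodule coaction to regroup, and then applies invariance a second time to land on $(\id \tensorc g)\Delta_{\E\tensora\E}(\rho\tensora\nu)$; no choice of basis, no appeal to Corollary \ref{3rdaugust20191}, and no use of the scalar-coefficient fact. You instead evaluate invariance only on a basis of ${}_0\E$ to get $g(e_i\tensora e_j)\in\IC\cdot 1$ (thereby re-proving the relevant half of Lemma \ref{14thfeb20191} rather than quoting it), and then propagate covariance to all of $\E\tensora\E$ via the right-coefficient expansion of Corollary \ref{3rdaugust20191} and right $\A$-linearity of $g$; your remark that the expansion $\sum_{ij} e_i \tensora e_j b_{ij}$ (rather than left coefficients) is the one compatible with right linearity is exactly the right bookkeeping, and the mirrored argument for the right-invariant case with a basis of $\E_0$ does go through. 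What each buys: the paper's computation is shorter in logical dependencies and works verbatim without invoking the module-freeness machinery, while yours is more concrete, uses only the weaker hypothesis of invariance on invariant elements, and makes explicit the scalar-coefficient phenomenon that the paper anyway needs immediately afterwards. The easy direction (apply $\id\tensorc\epsilon$ and the counit axiom) is identical in both.
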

\begin{proof}
	Let $g$ be a left-invariant metric on $\E$, and $\rho$, $\nu$ be elements of $\E$.
	Then the following computation shows that $g$ is a left-covariant map.
	\begin{equation*}
	\begin{aligned}
	& \Delta g(\rho \tensora \nu) = \Delta((\id \tensorc \epsilon g)(\Delta_{(\E \tensora \E)}(\rho \tensora \nu)))\\
	=& \Delta(\id \tensorc \epsilon g)(\rho_{(-1)} \nu_{(-1)} \tensorc \rho_{(0)} \tensora \nu_{(0)})\\
	=& \Delta(\rho_{(-1)} \nu_{(-1)}) \epsilon g(\rho_{(0)} \tensora \nu_{(0)})\\
	=& (\rho_{(-1)})_{(1)} (\nu_{(-1)})_{(1)} \tensorc (\rho_{(-1)})_{(2)} (\nu_{(-1)})_{(2)} \epsilon g(\rho_{(0)} \tensora \nu_{(0)})\\
	=& (\rho_{(-1)})_{(1)} (\nu_{(-1)})_{(1)} \tensorc ((\id \tensorc \epsilon g)((\rho_{(-1)})_{(2)} (\nu_{(-1)})_{(2)} \tensorc \rho_{(0)} \tensora \nu_{(0)}))\\
	=& \rho_{(-1)} \nu_{(-1)} \tensorc ((\id \tensorc \epsilon g)(\Delta_{(\E \tensora \E)}(\rho_{(0)} \tensora \nu_{(0)})))\\
	& {\rm \ (where \ we \ have \ used \ {co associativity} \ of \ comodule \ coactions) } \\
	=& \rho_{(-1)} \nu_{(-1)} \tensorc g(\rho_{(0)} \tensora \nu_{(0)})\\
	=& (\id \tensorc g)(\Delta_{(\E \tensora \E)}(\rho \tensora \nu)).
	\end{aligned}
	\end{equation*}
	On the other hand, suppose $g: \E\tensora \E \to \A$ is a left-covariant map. Then we have
	\begin{equation*}
	\begin{aligned}
	&(\id \tensorc \epsilon g) \Delta_{(\E \tensora \E)}(\rho \tensora \nu) = (\id \tensorc \epsilon)(\id \tensorc g)\Delta_{(\E \tensora \E)} (\rho \tensora \nu)\\
	= &(\id \tensorc \epsilon)\Delta g(\rho \tensora \nu) = g(\rho \tensora \nu).
	\end{aligned}
	\end{equation*}
	The proof of the right-covariant case is similar.
\end{proof}	

The following key result will be used throughout the article. 

\begin{lemma} (\cite{heckenberger}) \label{14thfeb20191}
	If $g$ is a pseudo-Riemannian metric which is left-invariant on a left-covariant $\A$-bimodule $\E$, then $g(\omega_1 \tensora \omega_2) \in \IC.1$ for all $\omega_1, \omega_2$ in $\zeroE.$ Similarly, if $ g $ is a right-invariant pseudo-Riemannian metric on a right-covariant $\A$-bimodule, then $ g (\eta_1 \tensora \eta_2) \in \mathbb{C}. 1 $ for all $ \eta_1, \eta_2 $ in $ \E_0. $
\end{lemma}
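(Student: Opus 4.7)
My plan is to reduce the statement to a short calculation using the equivalence between invariance and covariance established in Proposition \ref{29thaugust20192}. Since the left-invariant case and the right-invariant case are completely analogous, I will focus on the former.

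First I would invoke Proposition \ref{29thaugust20192} to replace the assumption that $g$ is left-invariant by the stronger-looking (but equivalent) statement that $g: \E \tensora \E \to \A$ is a left-covariant map, i.e.
$$\Delta \circ g = (\id \tensorc g) \circ \Delta_{\E \tensora \E}.$$
Next I would unpack what $\Delta_{\E \tensora \E}$ does to $\omega_1 \tensora \omega_2$ when both $\omega_i$ lie in $\zeroE$. By definition of the left coaction on a tensor product of left-covariant bimodules (recalled just after Lemma \ref{1staugust2019jb1}), together with $\Delta_\E(\omega_i) = 1 \tensorc \omega_i$, one obtains
$$\Delta_{\E \tensora \E}(\omega_1 \tensora \omega_2) = 1 \tensorc (\omega_1 \tensora \omega_2).$$

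Substituting into the left-covariance identity yields
$$\Delta\bigl(g(\omega_1 \tensora \omega_2)\bigr) = 1 \tensorc g(\omega_1 \tensora \omega_2).$$
Applying $\id \tensorc \epsilon$ to both sides and using the counit axiom $(\id \tensorc \epsilon)\Delta = \id$, I get
$$g(\omega_1 \tensora \omega_2) = \epsilon\bigl(g(\omega_1 \tensora \omega_2)\bigr) \cdot 1 \in \IC . 1,$$
which is exactly the conclusion.

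For the right-invariant case, I would run the symmetric argument: right-invariance is equivalent to right-covariance, the right coaction sends $\eta_1 \tensora \eta_2 \in \E_0 \tensorc \E_0$ to $(\eta_1 \tensora \eta_2) \tensorc 1$, so $\Delta(g(\eta_1 \tensora \eta_2)) = g(\eta_1 \tensora \eta_2) \tensorc 1$, and now applying $\epsilon \tensorc \id$ gives scalarity. There is no real obstacle here; the only thing to be a bit careful with is keeping straight which factor the counit is applied to, since applying $\epsilon$ on the wrong side would give a tautology rather than the desired conclusion.
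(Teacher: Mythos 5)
Your argument is correct, and since the paper states this lemma without proof (it is quoted from \cite{heckenberger}), there is no in-text proof to compare against; your computation is the standard one. Two small remarks. First, the detour through Proposition \ref{29thaugust20192} is not really needed: the defining identity of left-invariance, $(\id \tensorc \epsilon g)(\Delta_{(\E \tensora \E)}(\rho \tensora \nu)) = g(\rho \tensora \nu)$, applied with $\rho=\omega_1$, $\nu=\omega_2$ in $\zeroE$ together with $\Delta_{(\E \tensora \E)}(\omega_1 \tensora \omega_2) = 1 \tensorc \omega_1 \tensora \omega_2$ gives at once $g(\omega_1 \tensora \omega_2) = \epsilon\bigl(g(\omega_1 \tensora \omega_2)\bigr)\cdot 1 \in \IC.1$, so the covariance reformulation, while valid, is an extra step. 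Second, if you do route the argument through Proposition \ref{29thaugust20192}, note that it is stated for bicovariant bimodules whereas the lemma assumes only a left-covariant (respectively right-covariant) bimodule; this is harmless because the implication you actually use (left-invariance implies left-covariance of $g$) only involves the left comodule structure, but it deserves a sentence. Your bookkeeping of which tensor leg the counit is applied to is correct in both cases: $\id \tensorc \epsilon$ in the left-invariant case and $\epsilon \tensorc \id$ in the right-invariant case, the opposite choices being tautologies as you observe.
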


Let us clarify some of the properties of a left-invariant and right-invariant pseudo-Riemannian metrics.
To that end, we make the next definition which makes sense as we always work with finite bicovariant bimodules (see Remark \ref{20thjune}). The notations used in the next definition will be used throughout the article.

\begin{defn} \label{24thaugust20195}
	Let $ \E $ and $ g $ be as above. For a fixed basis $ \{ \omega_1, \cdots , \omega_n \} $ of $ \zeroE, $ we define $ g_{ij} = g (\omega_i \tensora \omega_j). $ Moreover, we define $ V_g: \E \rightarrow \E^* = \Hom_{\A} ( \E, \A ) $ to be the map defined by the formula
	$$ V_g (e) (f) = g (e \tensora f). $$
\end{defn}

\begin{prop} \label{23rdmay20192}
	Let $g$ be a left-invariant pseudo-Riemannian metric for $\E$ as in Definition \ref{24thmay20191}. Then the following statements hold:
	\begin{itemize}
		\item[(i)] The map $ V_g $ is a one-one right $ \A $-linear map from $ \E $ to $ \E^*. $ 
		\item[(ii)] If $ e \in \E $ is such that $ g (e \tensora f) = 0 $ for all $ f \in {}_0 \E, $ then $ e = 0. $ In particular, the map $ V_{g}|_{{}_0 \E} $ is one-one and hence an isomorphism from $ {}_0 \E $ to $ ({}_0 \E)^*.$
		\item[(iii)] The matrix $((g_{ij}))_{ij}$ is invertible.
		\item[(iv)] Let $g^{ij}$ denote the $(i,j)$-th entry of the inverse of the matrix $((g_{ij}))_{ij}$. Then $g^{ij}$ is an element of $\IC.1$ for all $i,j$.
		\item[(v)] If $g(e \tensora f)=0$ for all $e$ in $\zeroE$, then $f = 0$. 
	\end{itemize}
\end{prop}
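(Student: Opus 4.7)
My plan is to reduce all five statements to finite-dimensional linear algebra on $\zeroE$ by systematically exploiting three ingredients already available in the excerpt: the Hopf-module freeness of $\E$ from Proposition \ref{moduleiso}, the scalar-valuedness of $g$ on left-invariants from Lemma \ref{14thfeb20191}, and the right $\A$-linearity of $g$ built into Definition \ref{24thmay20191}.

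For part (i), I would verify right $\A$-linearity directly: with the right $\A$-module structure $(\phi \cdot a)(f) = \phi(a f)$ on $\E^{*} = \Hom_{\A}(\E, \A)$, the chain
$$V_g(e a)(f) = g(e a \tensora f) = g(e \tensora a f) = V_g(e)(a f) = (V_g(e) \cdot a)(f)$$
uses nothing beyond balancing over $\A$ in $\E \tensora \E$. Injectivity is simply Definition \ref{24thmay20191}(ii) rephrased. For part (ii), given $e$ with $g(e \tensora \omega) = 0$ for every $\omega \in \zeroE$, I would apply Proposition \ref{moduleiso} to expand an arbitrary $f \in \E$ as $\sum_j \omega_j a_j$ with $\omega_j \in \zeroE$, $a_j \in \A$; right $\A$-linearity of $g$ then forces $g(e \tensora f) = \sum_j g(e \tensora \omega_j) a_j = 0$, and Definition \ref{24thmay20191}(ii) yields $e = 0$. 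Since Lemma \ref{14thfeb20191} guarantees that $V_g(\omega)|_{\zeroE}$ takes values in $\IC \cdot 1$ for every $\omega \in \zeroE$, this makes $V_g|_{\zeroE}$ an injective $\IC$-linear map between vector spaces of the same finite dimension, hence an isomorphism onto $(\zeroE)^{*}$.

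Parts (iii) and (iv) then follow at once: with respect to the basis $\{\omega_i\}$ of $\zeroE$ and the dual $\IC$-basis $\{\omega_i^{*}\}$ of $(\zeroE)^{*}$, the matrix of $V_g|_{\zeroE}$ is precisely $(g_{ij})$, which must therefore be invertible over $\IC$; by Lemma \ref{14thfeb20191} its entries lie in $\IC \cdot 1$, so the same holds for $(g^{ij})$, and this inverse over $\IC$ serves equally as an inverse over $\A$. For (v), given $f$ with $g(e \tensora f) = 0$ for every $e \in \zeroE$, I would write $f = \sum_j \omega_j b_j$ via Proposition \ref{moduleiso} and use right $\A$-linearity to extract $\sum_j g_{ij} b_j = 0$ for each $i$; multiplying by $g^{ki}$ and summing over $i$ yields $b_k = 0$ for all $k$, whence $f = 0$.

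The whole argument is essentially bookkeeping once Lemma \ref{14thfeb20191} and Proposition \ref{moduleiso} are in hand; there is no real conceptual obstacle. The one point demanding care is to maintain a consistent convention for the right $\A$-module structure on $\E^{*}$ in (i), and to appreciate that the scalar-valuedness supplied by Lemma \ref{14thfeb20191} is exactly what permits the transition from linear algebra over $\A$ to linear algebra over $\IC$ --- without it, the invertibility claim for $(g_{ij})$ in (iii) would not close, because invertibility of an $\A$-valued matrix is generally much weaker than invertibility of its scalar part.
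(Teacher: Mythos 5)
Your proposal is correct and follows essentially the same route as the paper: everything is reduced to the finite-dimensional space $\zeroE$ via Lemma \ref{14thfeb20191} and the freeness supplied by Proposition \ref{moduleiso}, with the same computations for parts (iii)--(v) and the same use of the nondegeneracy condition for (i) and (ii) (you even spell out the first assertion of (ii), which the paper leaves implicit). The only cosmetic difference is in (iv), where you note that the scalar matrix $((g_{ij}))_{ij}$ is invertible over $\IC$ and that this scalar inverse is the unique inverse over $\A$, whereas the paper applies the counit $\epsilon$ to an a priori $\A$-valued inverse; both arguments are equally valid.
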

\begin{proof}
	The right $ \A $-linearity of $ V_g $ follows from the fact that $g$ is a well-defined map from $\E \tensora \E$ to $\A.$ The condition (2) of Definition \ref{24thmay20191} forces $V_g$ to be one-one. This proves (i). 
	
	For proving (ii), note that $V_g|_{{}_0 \E}$ is the restriction of a one-one map to a subspace. Hence, it is a one-one $\IC$-linear map. Since $g$ is left-invariant, by Lemma \ref{14thfeb20191}, for any $e$ in ${}_0 \E$, $V_g(e)(\zeroE)$ is contained in $\IC$. Therefore, $V_g$ maps $\zeroE$ into $(\zeroE)^*$. Since, $\zeroE$ and $(\zeroE)^*$ have the same finite dimension as vector spaces, $ V_{g}|_{{}_0 \E} : {}_0 \E \to ({}_0 \E)^* $ is an isomorphism. This proves (ii). 
	
	Now we prove (iii). Let $ \{ \omega_i \}_i $ be a basis of $ {}_0 \E $ and $ \{ \omega^*_i \}_i $ be a dual basis, i.e, $ \omega^*_i (\omega_j) = \delta_{ij}. $ Since $ V_{g}|_{{}_0 \E} $ is a vector space isomorphism from $ {}_0 \E $ to $ ({}_0 \E)^* $ by part (ii), there exist complex numbers $ a_{ij} $ such that 
	$$(V_{g})^{-1}(\omega_i^*)= \sum_j a_{ij} \omega_j$$.
	This yields
	\begin{equation*}
	\delta_{ik} = \omega_i^*(\omega_k)
	= g(\sum_j a_{ij} \omega_j \tensora \omega_k)
	= \sum_j a_{ij} g_{jk}.
	\end{equation*}
	Therefore, $((a_{ij}))_{ij}$ is the left-inverse and hence the inverse of the matrix $((g_{ij}))_{ij}$. This proves (iii).
	
	For proving (iv), we use the fact that $g_{ij}$ is an element of $\IC.1$ for all $i,j$. Since
	$$ \sum_k g(\omega_i \tensora \omega_k)g^{kj} = \delta_{ij}.1 = \sum_k g^{ik}g(\omega_k \tensora \omega_j)=\delta_{ij}, $$
	we have 
	$$\sum_k g(\omega_i \tensora \omega_k)\epsilon(g^{kj})= \delta_{ij} = \sum_k \epsilon(g^{ik})g(\omega_k \tensora \omega_j).$$
	So, the matrix $((\epsilon(g^{ij})))_{ij}$ is also an inverse to the matrix $((g(\omega_i \tensora \omega_j)))_{ij}$ and hence $ g^{ij} = \epsilon(g^{ij})$ and $g^{ij}$ is in $\mathbb{C}.1.$
	
	Finally, we prove (v) using (iv). Suppose $f$ be an element in $\E$ such that $g(e \tensora f)=0$ for all $e$ in ${}_0\E$. Let $f=\sum_k \omega_k a_k$ for some elements $a_k$ in $\A$. Then for any fixed index $i_0$, we obtain
	\begin{equation*}
	0 = g(\sum_j g^{i_0 j}\omega_j \tensora \sum_k \omega_k a_k)
	= \sum_k \sum_j g^{i_0j} g_{jk} a_k
	= \sum_k \delta_{i_0k} a_k
	= a_{i_0}.
	\end{equation*}
	Hence, we have that $f=0$. This finishes the proof.
\end{proof}

We apply the results in Proposition \ref{23rdmay20192} to exhibit a basis of the free right $\A$-module $ V_g (\E). $ This will be used in making Definition \ref{26thaugust20191sm} which is needed to prove our main Theorem \ref{29thaugust20191sm}.

\blmma \label{28thaugust2019night1}
Suppose $ \{ \omega_i \}_i $ is a basis of $ \zeroE $ and $ \{ \omega^*_i \}_i $ be the dual basis as in the proof of Proposition \ref{23rdmay20192}. If $ g $ is a pseudo-Riemannian left-invariant metric on $\E,$ then $ V_g (\E) $ is a free right $ \A $-module with basis $ \{ \omega^*_i \}_i. $ 
\elmma 
\begin{proof}
	We will use the notations $ (g_{ij})_{ij} $ and $ g^{ij} $ from of Proposition \ref{23rdmay20192}. Since $ V_g $ is a right $ \A $-linear map, $ V_g (\E) $ is a right $\A$-module. Since
	\begin{equation} \label{28thaugust2019night2} V_g (\omega_i) = \sum_{j} g_{ij} \omega^*_j \end{equation}
	and the inverse matrix $ (g^{ij})_{ij} $ has scalar entries (Proposition \ref{23rdmay20192}), we get
	$$ \omega^*_k = \sum_i g^{ki} V_g (\omega_i) $$
	and so $ \omega^*_k $ belongs to $ V_g (\E) $ for all $k.$ By the right $\A$-linearity of the map $ V_g, $ we conclude that the set $ \{ \omega^*_i \}_i $ is right $\A$-total in $ V_g (\E). $ 
	
	Finally, if $ a_i $ are elements in $ \A $ such that $\sum_k \omega^*_k a_k = 0, $ then by \eqref{28thaugust2019night2}, we have
	$$ 0 = \sum_{i,k} g^{ki} V_g (\omega_i) a_k = V_g (\sum_i \omega_i ( \sum_k g^{ki} a_k ) ). $$
	As $ V_g $ is one-one and $ \{ \omega_i \}_i $ is a basis of the free module $ \E, $ we get
	$$ \sum_k g^{ki} a_k = 0 ~ \forall ~ i. $$
	Multiplying by $ g_{ij} $ and summing over $i$ yields $a_j = 0. $ This proves that $ \{ \omega^*_i \}_i $ is a basis of $ V_g (\E) $ and finishes the proof.
\end{proof}

\brmrk
Let us note that for all $e \in \E,$ the following equation holds:
\begin{equation} \label{25thjune20} e = \sum_i \omega_i \omega^*_i ( e  ). \end{equation}
\ermrk

The following proposition was kindly pointed out to us by the referee for which we will need the notion of a left dual of an object in a monoidal category. We refer to Definition 2.10.1 of \cite{etingof} or Definition XIV.2.1 of \cite{kassel} for the definition.
\begin{prop} \label{25thjune202}
	Suppose $g$ is a pseudo-Riemannian $\A$-bilinear pseudo-Riemannian metric on a finite bicovariant $\A$-bimodule. Let $\widetilde{\E}$ denote the left dual of the object $\E$ in the category $\bimodbicov.$ Then $\widetilde{\E}$ is isomorphic to $\E$ as objects in the category $\bimodbicov$ via the morphism $V_g.$
\end{prop}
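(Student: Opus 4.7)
My plan is to prove the proposition via the universal property of left duals in the rigid monoidal category $\bimodbicov$. The key idea is that a non-degenerate $\A$-bilinear map $g : \E \tensora \E \to \A$ together with a suitable coevaluation exhibits $\E$ as its own left dual, and uniqueness of duals then identifies $V_g$ with the canonical comparison map $\E \to \widetilde{\E}$.

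First, I would verify that $V_g$ is an $\A$-bimodule map. Right $\A$-linearity is already provided by Proposition \ref{23rdmay20192}(i). For left $\A$-linearity, a direct computation gives $V_g(ae)(f) = g(ae \tensora f) = a\,g(e \tensora f) = (a \cdot V_g(e))(f)$, using the left $\A$-linearity of $g$ and the canonical left $\A$-action on $\widetilde{\E} = \Hom_{\A}(\E, \A)$ defined by $(a \cdot \phi)(f) = a\phi(f)$.

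Next, I would construct a coevaluation $\coev_g : \A \to \E \tensora \E$ using the inverse of $g$ on a left-invariant basis $\{\omega_i\}$ of $\zeroE$, along the lines of Proposition \ref{23rdmay20192}(iii)-(iv), and verify the snake identities $(g \tensora \id)(\id \tensora \coev_g) = \id_{\E}$ and $(\id \tensora g)(\coev_g \tensora \id) = \id_{\E}$. These reduce to the matrix relations $\sum_k g_{ik} g^{kj} = \delta_{ij}$, extended from left-invariants to all of $\E$ via $\A$-bilinearity and the freeness of $\E$ as an $\A$-module (Proposition \ref{moduleiso}). By uniqueness of left duals, this provides the isomorphism $\E \to \widetilde{\E}$ in $\bimodbicov$ identified with $V_g$; injectivity is Proposition \ref{23rdmay20192}(i), and the explicit inverse $\omega_i^* \mapsto \sum_j g^{ji}\omega_j$ is furnished by $\coev_g$ together with Lemma \ref{28thaugust2019night1}.

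The main obstacle will be ensuring that $g$ and $\coev_g$ are genuine morphisms in $\bimodbicov$, i.e., that they respect the bicomodule structures as well as the bimodule structures. For $g$, this amounts to bi-covariance (equivalent by Proposition \ref{29thaugust20192} to bi-invariance); for $\coev_g$, it amounts to bi-invariance of the element $\coev_g(1) \in \E \tensora \E$. Confirming these properties from $\A$-bilinearity and non-degeneracy in the finite bicovariant setting — exploiting the scalar nature of the metric coefficients on left-invariants and the identification $(\E \tensora \E)_0 = \E_0 \tensorc \E_0$ from Proposition \ref{3rdaugust20192} — is the delicate step that I expect will require the most care.
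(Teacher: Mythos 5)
Your proposal follows essentially the same route as the paper: exhibit $\E$ as a left dual of itself via the evaluation $g$ and the coevaluation $\coev_g(1)=\sum_{ij} g^{ij}\,\omega_i \tensora \omega_j$ (which is exactly the paper's $\sum_i \omega_i \tensora V_g^{-1}(\omega_i^*)$), verify the snake identities using the scalar inverse matrix and right $\A$-totality of $\zeroE$, and then invoke uniqueness of left duals to identify the comparison isomorphism $\E \to \widetilde{\E}\cong\E^*$ with $V_g$. The step you flag as delicate is handled in the paper simply by reading bicovariance of $\ev_g$ off the (assumed) bicovariance of $g$ and obtaining bicovariance of $\coev_g=(\id\tensora V_g^{-1})\circ\coev$ from that of $V_g$ and the canonical $\coev$, so no further work is needed there.
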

\begin{proof} It is well-known that $\widetilde{\E}$ and $ \E^* $ are isomorphic objects in the category $\bimodbicov.$ This follows by using the bicovariant $\A$-bilinear maps
\[ \ev: \tilde{\E} \tensora \E \to \A; \quad \phi \tensora e \mapsto \phi(e), ~ \coev: \A \to \E \tensora \tilde{\E}; \quad 1 \mapsto \sum_i \omega_i \tensora \omega_i^* \]
We define $ \ev_g: \E \tensora \E \rightarrow \A  $ and  $ \coev_g: \A \rightarrow \E \tensora \E $ by the following formulas:$$ \ev_g ( e \tensora f ) = g ( e \tensora f ), \quad \coev_g ( 1 ) = \sum_i \omega_i \tensora V^{-1}_g ( \omega^*_i ). $$
Then since $g$ is both left and right $\A$-linear, $ \ev_g $ and $ \coev_g $ are $\A$-$\A$-bilinear maps. The bicovariance of $g$ implies the bicovariance of $\ev_g$ while the bicovariance of $\coev_g = ( \id \tensora V^{-1}_g ) \circ \coev $ follows from the bicovariance of $V_g$ and $\coev.$

Since the left dual of an object is unique upto isomorphism, we need to check the following identities for all $e$ in $\E$:
$$ ( \ev_g \tensora \id ) ( \id \tensora \coev_g ) ( e ) = e, ~ ( \id \tensora \ev_g ) ( \coev_g \tensora \id ) ( e ) = e.   $$
But these follow by a simple computation using the fact that ${}_0 \E $ is right $\A$-total in $\E$ and the identity \eqref{25thjune20}.

From the above discussion, we have that $\E$ and $\E^*$ are two left duals of the object $\E$ in the category $ \bimodbicov. $ Then by the proof of Proposition 2.10.5 of \cite{etingof}, we know that $ ( \ev_g \tensora \id_{\E^*} ) ( \id_{\E} \tensora \coev ) $ is an isomorphism from $\E$ to $\E^*.$ But it can be easily checked that $ ( \ev_g \tensora \id_{\E^*} ) ( \id_{\E} \tensora \coev ) = V_g. $ This completes the proof.
\end{proof}

Now we state a result on bi-invariant (i.e both left and right-invariant) pseudo-Riemannian metric.

\begin{prop} \label{27thjune20197}
	Let $g$ be a pseudo-Riemannian metric on $\E$ and the symbols $\{ \omega_i \}_i$, $\{ g_{ij} \}_{ij}$ be as above. If
	\begin{equation} \label{29thaugust20194}
	{}_\E \Delta(\omega_i) = \sum_j \omega_j \tensorc R_{ji}
	\end{equation}
	(see \eqref{26thaugust20191}), then $g$ is bi-invariant if and only if the elements $ g_{ij} $ are scalar and
	\begin{equation} \label{6thnov20191}
	g_{ij} = \sum_{kl} g_{kl} R_{ki} R_{lj}.
	\end{equation}
\end{prop}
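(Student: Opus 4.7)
My strategy is to reduce both directions to checking statements on the right $\A$-module basis $\{\omega_i \tensora \omega_j\}_{i,j}$ of $\E \tensora \E$ (guaranteed by Corollary \ref{3rdaugust20191}) and to trade the invariance conditions for covariance via Proposition \ref{29thaugust20192}.

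For the forward direction, suppose $g$ is bi-invariant. Left-invariance together with Lemma \ref{14thfeb20191} immediately forces $g_{ij} \in \IC \cdot 1$. For the quadratic identity, I would apply right-covariance, i.e. $(g \tensorc \id) \circ {}_{\E \tensora \E}\Delta = \Delta \circ g$, to $\omega_i \tensora \omega_j$. Using \eqref{29thaugust20194}, the tensor-product right coaction equals $\sum_{kl} \omega_k \tensora \omega_l \tensorc R_{ki} R_{lj}$, so the left-hand side becomes $\sum_{kl} g_{kl} \tensorc R_{ki} R_{lj}$. The right-hand side is $\Delta(g_{ij}) = g_{ij}(1 \tensorc 1)$ since $g_{ij}$ is scalar. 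Applying $\epsilon \tensorc \id$ then gives $g_{ij} = \sum_{kl} g_{kl} R_{ki} R_{lj}$.

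For the converse, assume $g_{ij} \in \IC \cdot 1$ and the quadratic identity holds. The key observation is that both sides of each invariance identity are maps on $\E \tensora \E$ with matching right $\A$-module behavior: $g$ and $(\id \tensorc \epsilon g)\Delta_{\E \tensora \E}$ are both right $\A$-linear (using $\Delta_{\E \tensora \E}(xa) = \Delta_{\E \tensora \E}(x)\Delta(a)$, right $\A$-linearity of $g$, and the counit property of $\epsilon$), while $(g \tensorc \id){}_{\E \tensora \E}\Delta$ and $\Delta \circ g$ both satisfy $T(xa) = T(x)\Delta(a)$. It therefore suffices to check each identity on the basis $\{\omega_i \tensora \omega_j\}$. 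On these elements, left-invariance reduces to $\epsilon(g_{ij}) = g_{ij}$, automatic since $g_{ij}$ is scalar, and right-invariance reduces (after identifying $\Delta(g_{ij}) = 1 \tensorc g_{ij}$) to $\sum_{kl} g_{kl} R_{ki} R_{lj} = g_{ij}$, which is the hypothesis. Hence $g$ is bi-invariant.

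The proof is essentially a bookkeeping exercise once Proposition \ref{29thaugust20192} is in hand; the one place to be careful is verifying the right $\A$-linearity of the composite maps so that the reduction to the finite basis $\{\omega_i \tensora \omega_j\}$ is legitimate.
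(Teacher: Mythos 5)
Your proof is correct and takes essentially the same route as the paper's: Lemma \ref{14thfeb20191} for the scalarity of $g_{ij}$, right-covariance via Proposition \ref{29thaugust20192} evaluated on $\omega_i \tensora \omega_j$ for the identity \eqref{6thnov20191}, and the converse by verifying the (co)invariance identities on the right $\A$-module basis $\{\omega_i \tensora \omega_j\}_{i,j}$. The only difference is that you spell out the right $\A$-linearity bookkeeping justifying the reduction to that basis, which the paper leaves implicit.
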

\begin{proof}
	Since $g$ is left-invariant, the elements $g_{ij}$ are in $\IC.1$. Moreover, the right-invariance of $g$ implies that $g$ is right-covariant (Proposition \ref{29thaugust20192}), i.e. \begin{equation*} \begin{aligned} & 1 \tensorc g_{ij} = \Delta(g_{ij}) = (g \tensora \id){}_{\E \tensora \E} \Delta(\omega_i \tensorc \omega_j) \\ =& (g \tensora \id)(\sum_{kl} \omega_k \tensora \omega_l \tensorc R_{ki} R_{lj}) = 1 \tensorc \sum_{kl} g_{kl} R_{ki} R_{lj}, \end{aligned} \end{equation*} so that
	\begin{equation} \label{29thaugust20193}
	g_{ij} = \sum_{kl} g_{kl} R_{ki} R_{lj}.
	\end{equation}
	Conversely, if $ g_{ij} = g (\omega_i \tensora \omega_j) $ are scalars and \eqref{6thnov20191} is satisfied, then $ g $ is left-invariant and right-covariant. By Proposition \ref{29thaugust20192}, $g$ is right-invariant. 
\end{proof}	

We end this section by comparing our definition of pseudo-Riemannian metrics with some of the other definitions available in the literature.

Proposition \ref{23rdmay20192} shows that our notion of pseudo-Riemannian metric coincides with the right $\A$-linear version of a ``symmetric metric" introduced in Definition 2.1 of \cite{heckenberger} if we impose the condition of left-invariance.

Next, we compare our definition with the one used by Beggs and Majid in Proposition 4.2 of \cite{majidpodles} (also see \cite{beggsmajidbook} and references therein). To that end, we need to recall the construction of the two forms by Woronowicz (\cite{woronowicz}).

If $ \E $ is a bicovariant $\A$-bimodule and $ \sigma $ be the map as in Proposition \ref{4thmay20193}, Woronowicz defined the space of two forms as: 
$$\twoform := (\E \tensora \E) \big/ \rm {\rm Ker} (\sigma - 1).$$
The symbol $\wedge$ will denote the quotient map
$$ \wedge: \E \tensora \E \to \twoform. $$
Thus,
\begin{equation} \label{22ndaugust20191}
{\rm Ker}(\wedge) = {\rm Ker}(\sigma - 1). 
\end{equation}
In Proposition 4.2 of \cite{majidpodles}, the authors define a metric on a bimodule $ \E $ over a (possibly) noncommutative algebra $\A$ as an element $ h $ of $ \E \tensora \E $ such that $ \wedge(h) = 0. $ We claim that metrics in the sense of Beggs and Majid are in one to one correspondence with elements $ g \in \Hom_\A (\E \tensora \E, \A) $ (not necessarily left-invariant) such that $ g \circ \sigma = g. $ Thus, modulo the nondegeneracy condition (ii) of Definition \ref{24thmay20191}, our notion of pseudo-Riemannian metric matches with the definition of metric by Beggs and Majid.

Indeed, if $ g \in \Hom_\A (\E \tensora \E, \A) $ as above and $ \{ \omega_i \}_i, $ is a basis of $\zeroE,$ then the equation $ g \circ \sigma = g $ implies that 
$$ g \circ \sigma (\omega_i \tensora \omega_j) = g (\omega_i \tensora \omega_j). $$
However, by equation (3.15) of \cite{woronowicz}, we know that 
$$ \sigma (\omega_i \tensora \omega_j) = \sum_{k,l} \sigma^{kl}_{ij} \omega_k \tensora \omega_l $$
for some scalars $ \sigma^{kl}_{ij}. $ Therefore, we have
\begin{equation} \label{12thoct20191} \sum_{k,l} \sigma^{kl}_{ij} g (\omega_k \tensora \omega_l) = g (\omega_i \tensora \omega_j). \end{equation}
We claim that the element $ h = \sum_{i,j} g (\omega_i \tensora \omega_j) \omega_i \tensora \omega_j $ satisfies $ \wedge (h) = 0. $ Indeed, by virtue of \eqref{22ndaugust20191}, it is enough to prove that $ (\sigma - 1) (h) = 0. $ But this directly follows from \eqref{12thoct20191} using the left $\A$-linearity of $\sigma.$ 

This argument is reversible and hence starting from $ h \in \E \tensora \E $ satisfying $ \wedge(h) = 0, $ we get an element $ g \in \Hom_\A (\E \tensora \E, \A) $ such that for all $i,j,$
$$ g \circ \sigma (\omega_i \tensora \omega_j) = g (\omega_i \tensora \omega_j). $$
Since $ \{ \omega_i \tensora \omega_j : i,j \} $ is right $\A$-total in $ \E \tensora \E $ (Corollary \ref{3rdaugust20191}) and the maps $g, \sigma $ are right $\A$-linear, we get that $ g \circ \sigma = g.$ This proves our claim. Let us note that since we did not assume $g$ to be left invariant, the quantities $ g (\omega_i \tensora \omega_j)$ need not be scalars. However, the proof goes through since the elements $ \sigma^{ij}_{kl} $ are scalars.

\section{Pseudo-Riemannian metrics for cocycle deformations} \label{21staugust20197}

This section concerns the braiding map and pseudo-Riemannian metrics of bicovariant bimodules under cocycle deformations of Hopf algebras. This section contains two main results. We start by recalling that a bicovariant bimodule $ \E $ over a Hopf algebra $\A$ can be deformed in the presence of a $2$-cocycle $ \gamma $ on $\A$ to a bicovariant $\A_\gamma$-bimodule $\E_\gamma.$ We prove that the canonical braiding map of the bicovariant bimodule $ \E_\gamma $ (Proposition \ref{4thmay20193}) is a cocycle deformation of the canonical braiding map of $\E.$ Finally, we prove that pseudo-Riemannian bi-invariant metrics on $ \E $ and $ \E_\gamma $ are in one to one correspondence.

Throughout this section, we will make heavy use of the Sweedler notations as spelled out in \eqref{28thaugust20191}, \eqref{28thaugust20192} and \eqref{28thaugust20193}. The coassociativity of $\Delta$ will be expressed by the following equation:
$$(\Delta \tensorc \id)\Delta(a) = (\id \tensorc \Delta) \Delta(a) = a_{(1)} \tensorc a_{(2)} \tensorc a_{(3)}.$$ 
Also, when $m$ is an element of a bicovariant bimodule, we will use the notation
\begin{equation} \label{28thaugust20194}
(\id \tensorc {}_M \Delta)\Delta_M (m) = (\Delta_M \tensorc \id) {}_M \Delta(m) = m_{(-1)} \tensorc m_{(0)} \tensorc m_{(1)}.
\end{equation}

\bdfn
A cocycle $\gamma$ on a Hopf algebra $ ( \A, \Delta ) $ is a $\mathbb{C}$-linear map $\gamma: \A \tensorc \A \rightarrow \mathbb{C} $ such that it is convolution invertible, unital, i.e, 
$$ \gamma (a \tensorc 1) = \epsilon (a) = \gamma (1 \tensorc a) $$
and for all $a,b,c$ in $\A,$
\begin{equation} \label{(iii)} \gamma (a_{(1)} \tensorc b_{(1)}) \gamma (a_{(2)} b_{(2)} \tensorc c) = \gamma (b_{(1)} \tensorc c_{(1)}) \gamma (a \tensorc b_{(2)} c_{(2)}).\end{equation}
\edfn

Given a Hopf algebra $ (\A, \Delta) $ and such a cocycle $ \gamma $ as above, we have a new Hopf algebra $ (\A_\gamma, \Delta_\gamma) $ which is equal to $ \A $ as a vector space, the coproduct $ \Delta_\gamma $ is equal to $ \Delta $ while the algebra structure $ \ast_\gamma $ on $ \A_\gamma $ is defined by the following equation:
\begin{equation} \label{2ndseptember20191} a \ast_\gamma b = \gamma (a_{(1)} \tensorc b_{(1)}) a_{(2)} b_{(2)} \overline{\gamma} (a_{(3)} \tensorc b_{(3)}). \end{equation}
Here, $ \overline{\gamma} $ is the convolution inverse to $ \gamma $ which is unital and satisfies the following equation: 
\begin{equation} \label{(iiiprime)} \overline{\gamma} (a_{(1)} b_{(1)} \tensorc c) \overline{\gamma} (a_{(2)} \tensorc b_{(2)}) = \overline{\gamma}(a \tensorc b_{(1)} c_{(1)}) \overline{\gamma} (b_{(2)} \tensorc c_{(2)}). \end{equation}

We refer to Theorem 1.6 of \cite{doi} for more details.

Suppose $ M $ is a bicovariant $\A$-$\A$-bimodule. Then $ M $ can also be deformed in the presence of a cocycle. This is the content of the next proposition.

\begin{prop} \label{8thmay20191} (Theorem 2.5 of \cite{majidcocycle}) 
	Suppose $ M $ is a bicovariant $\A$-bimodule and $ \gamma $ is a $2$-cocycle on $\A.$ Then we have a bicovariant $\A_\gamma $-bimodule $ M_\gamma $ which is equal to $ M $ as a vector space but the left and right $ \A_\gamma $-module structures are defined by the following formulas:
	\begin{eqnarray} & \label{9thmay20197} a *_\gamma m = \gamma(a_{(1)} \tensorc m_{(-1)}) a_{(2)}  m_{(0)} \overline{\gamma}(a_{(3)} \tensorc m_{(1)})\\ & \label{9thmay20198} m *_\gamma a = \gamma(m_{(-1)} \tensorc a_{(1)}) m_{(0)}  a_{(2)} \overline{\gamma}(m_{(1)} \tensorc a_{(3)}), \end{eqnarray}
	for all elements $m$ of $M$ and for all elements $a$ of $\A$. Here, $*_\gamma$ denotes the right and left $\A_\gamma$-module actions, and $.$ denotes the right and left $\A$-module actions.
	
	The $\A_\gamma$-bicovariant structures are given by \begin{equation} \label{10thseptember20191sm} {} \Delta_{M_\gamma}:= \Delta_M : M_\gamma \rightarrow \A_\gamma \tensorc M_\gamma ~ {\rm and} ~ {}_{M_\gamma} \Delta:= {}_M \Delta: M_\gamma \rightarrow M_\gamma \tensorc \A_\gamma. \end{equation} 
\end{prop}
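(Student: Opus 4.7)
The plan is to verify, by direct Sweedler-notation computation, each axiom defining a bicovariant $\A_\gamma$-bimodule structure on $M_\gamma$. The essential tools are the cocycle condition \eqref{(iii)}, its companion \eqref{(iiiprime)} for $\overline{\gamma}$, and the Yetter-Drinfeld-type compatibility $(\id \tensorc {}_M \Delta)\Delta_M = (\Delta_M \tensorc \id) {}_M \Delta$ which permits the unambiguous triple Sweedler notation in \eqref{28thaugust20194}.

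First I would establish that \eqref{9thmay20197} defines a left $\A_\gamma$-module structure on $M_\gamma$. Unitality follows at once from $\gamma(1 \tensorc -) = \epsilon = \gamma(- \tensorc 1)$ together with counitality of $\Delta_M$ and ${}_M\Delta$. For associativity $(a \ast_\gamma b) \ast_\gamma m = a \ast_\gamma (b \ast_\gamma m)$, one expands both sides using \eqref{9thmay20197} and \eqref{2ndseptember20191}, then reassociates the resulting cocycle evaluations using \eqref{(iii)} in exactly the pattern that proves associativity of $\ast_\gamma$ on $\A_\gamma$. The verification for \eqref{9thmay20198} is symmetric, and the bimodule compatibility $(a \ast_\gamma m) \ast_\gamma b = a \ast_\gamma (m \ast_\gamma b)$ then follows by combining \eqref{(iii)} and \eqref{(iiiprime)} with the fact that $amb$ is unambiguous in the original $\A$-bimodule $M$.

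Second, since $\Delta_{M_\gamma} = \Delta_M$ and ${}_{M_\gamma}\Delta = {}_M \Delta$ as $\mathbb{C}$-linear maps, and since $\A_\gamma$ carries the same coproduct $\Delta$ as $\A$, coassociativity, counitality, and the triple compatibility (iii) of Section \ref{section2} are inherited unchanged from the bicovariant structure on $M$. What remains to verify is the bicovariance of the coactions with respect to the deformed module actions, for instance
\[ \Delta_{M_\gamma}(a \ast_\gamma m) = \Delta_\gamma(a) \ast_\gamma \Delta_{M_\gamma}(m), \]
where the right-hand side uses the diagonal $\A_\gamma$-action on $\A_\gamma \tensorc M_\gamma$. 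Expanding the left-hand side via \eqref{9thmay20197} and using that $\Delta_M$ is an $\A$-bimodule map from $M$ to $\A \tensorc M$ reduces the identity to an equation involving $\gamma$, $\overline{\gamma}$ and $\Delta$ which follows from $\Delta$ being an algebra homomorphism together with the Yetter-Drinfeld compatibility \eqref{28thaugust20194}. The three remaining covariance checks are parallel.

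The principal obstacle is the bookkeeping: each occurrence of $\ast_\gamma$ inserts two cocycle factors and forces a double Sweedler expansion, so the fully expanded form of $(a \ast_\gamma m) \ast_\gamma b = a \ast_\gamma (m \ast_\gamma b)$ becomes a sum of multi-index Sweedler terms each weighted by a product of four $\gamma, \overline{\gamma}$ evaluations. The cancellations ultimately work because \eqref{(iii)} is precisely the reassociation identity designed for this purpose, but choosing a consistent relabeling of Sweedler indices so that \eqref{(iii)} applies in recognizable form is the one genuinely nontrivial organizational step of the proof.
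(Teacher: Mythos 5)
Your outline is sound, but note first that the paper itself offers no proof of Proposition \ref{8thmay20191}: it is quoted as Theorem 2.5 of \cite{majidcocycle}, so there is no in-text argument to compare against, and your direct Sweedler verification is essentially the proof given in that reference. The ingredients you name are the right ones: unitality of $\gamma$ and $\overline{\gamma}$, the cocycle identity \eqref{(iii)} together with \eqref{(iiiprime)}, convolution cancellation of adjacent $\gamma,\overline{\gamma}$ factors, the fact that $\Delta_M$ and ${}_M\Delta$ are $\A$-bimodule maps, and the compatibility \eqref{28thaugust20194} that makes the triple Sweedler notation unambiguous. One small correction of emphasis: the bimodule property of the coactions is already needed in the module-associativity step, not only in the covariance checks, since expanding $a\ast_\gamma(b\ast_\gamma m)$ requires the coactions of $b_{(2)}\cdot m_{(0)}$; likewise the left-hand side $(a\ast_\gamma b)\ast_\gamma m$ uses that the cocycle factors are scalars and that $\Delta$ is an algebra map, so the pattern is not literally identical to the associativity of $\ast_\gamma$ on $\A_\gamma$ but a module-version of it. As an alternative to your hands-on computation, the conceptual route indicated in Remark \ref{26thjune20202} is to invoke the monoidal equivalence between $\bimodbicov$ and $\bimodbicovgamma$ (via Yetter--Drinfeld modules, Theorem 5.7 of \cite{schauenberg}, Theorem 6.1 of \cite{bichon}), under which the twisted actions \eqref{9thmay20197}--\eqref{9thmay20198} with unchanged coactions \eqref{10thseptember20191sm} are exactly what the equivalence produces on objects; that route yields Propositions \ref{11thjuly20192} and \ref{11thjuly20191} simultaneously, whereas your verification is more elementary, self-contained, and produces the explicit formulas directly.
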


\begin{rmk} \label{20thjune2}
	From Proposition \ref{8thmay20191}, it is clear that if $M$ is a finite bicovariant bimodule (see Remark \ref{20thjune}), then $ M_\gamma $ is also a finite bicovariant bimodule.
\end{rmk}

We end this subsection by recalling the following result on the deformation of bicovariant maps.

\begin{prop} \label{11thjuly20192} (Theorem 2.5 of \cite{majidcocycle}) 
	Let $(M, \Delta_M, {}_M \Delta)$ and $(N, \Delta_N, {}_N \Delta)$ be bicovariant $\A$-bimodules, $T: M \to N$ be a $\IC$-linear bicovariant map and $\gamma$ be a cocycle as above. Then there exists a map $T_\gamma: M_\gamma \to N_\gamma$ defined by $T_\gamma (m) = T(m)$ for all $m$ in $M$. Thus, $T_\gamma = T$ as $\IC$-linear maps. Moreover, we have the following:
	\begin{itemize}
		\item[(i)] the deformed map $T_\gamma: M_\gamma \to N_\gamma$ is an $\A_\gamma$ bicovariant map,
		\item[(ii)] if $T$ is a bicovariant right (respectively left) $\A$-linear map, then $T_\gamma$ is a bicovariant right (respectively left) $\A_\gamma$-linear map,
		\item[(iii)] if $(P, \Delta_P, {}_{P} \Delta)$ is another bicovariant $\A$-bimodule, and $S: N \to {P} $ is a bicovariant map, then $(S \circ T)_\gamma : M_\gamma \to P_\gamma$ is a bicovariant map and $S_\gamma \circ T_\gamma = (S \circ T)_\gamma$.
	\end{itemize} 
\end{prop}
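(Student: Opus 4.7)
The plan is to exploit the fact that $M_\gamma$ equals $M$ as a $\IC$-vector space, $\A_\gamma$ equals $\A$ as a $\IC$-vector space, and the coactions $\Delta_{M_\gamma}, {}_{M_\gamma}\Delta$ are literally the same $\IC$-linear maps as $\Delta_M, {}_M \Delta$ (only the algebra/module structures change). Since $T_\gamma$ is defined to equal $T$ as a $\IC$-linear map, parts (i) and (iii) should reduce to tautologies, and all the real work lives in part (ii).

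For (i), I would simply observe that the left-covariance condition $(\id \tensorc T) \Delta_M = \Delta_N T$ and the right-covariance condition $(T \tensorc \id) {}_M \Delta = {}_N \Delta \, T$ are equations of $\IC$-linear maps. Replacing $\Delta_M, {}_M\Delta, \Delta_N, {}_N\Delta$ by $\Delta_{M_\gamma}, {}_{M_\gamma}\Delta, \Delta_{N_\gamma}, {}_{N_\gamma}\Delta$ changes nothing, so $T_\gamma$ is $\A_\gamma$-bicovariant.

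For (ii), assume $T$ is right $\A$-linear; I would compute
\begin{eqnarray*}
T_\gamma(m *_\gamma a) & = & T\bigl( \gamma(m_{(-1)} \tensorc a_{(1)}) \, m_{(0)} \cdot a_{(2)} \, \overline{\gamma}(m_{(1)} \tensorc a_{(3)}) \bigr) \\
& = & \gamma(m_{(-1)} \tensorc a_{(1)}) \, T(m_{(0)}) \cdot a_{(2)} \, \overline{\gamma}(m_{(1)} \tensorc a_{(3)}),
\end{eqnarray*}
using that the cocycle values are scalars, $T$ is $\IC$-linear, and $T$ is right $\A$-linear. The key step is then to apply the bicovariance of $T$, which via \eqref{28thaugust20194} gives the identity
$$ m_{(-1)} \tensorc T(m_{(0)}) \tensorc m_{(1)} = T(m)_{(-1)} \tensorc T(m)_{(0)} \tensorc T(m)_{(1)} $$
in $\A \tensorc N \tensorc \A$. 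Substituting this into the expression above yields $T_\gamma(m) *_\gamma a$, as required. The left $\A_\gamma$-linearity case is symmetric, using \eqref{9thmay20197} instead of \eqref{9thmay20198}. For (iii), the equality $S_\gamma \circ T_\gamma = (S \circ T)_\gamma$ is immediate because all three maps coincide with $S \circ T$ as $\IC$-linear maps, and the bicovariance of $(S \circ T)_\gamma$ follows from (i) applied to the bicovariant map $S \circ T$.

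The main obstacle will be purely bookkeeping: keeping track of the three Sweedler indices $(m_{(-1)}, m_{(0)}, m_{(1)})$ under the $\A_\gamma$-action and making sure that the bicovariance identity is applied in the correct tensor slot so that the twist cocycles $\gamma, \overline{\gamma}$ pass from being evaluated on $(m_{(-1)}, m_{(1)})$ to $(T(m)_{(-1)}, T(m)_{(1)})$ in one clean step. No structural difficulty is expected beyond this; no use of the cocycle conditions \eqref{(iii)} or \eqref{(iiiprime)} should be needed, since those are already encoded in the consistency of the $\A_\gamma$-module structure on $M_\gamma$ provided by Proposition \ref{8thmay20191}.
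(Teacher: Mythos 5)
Your proposal is correct. Note that the paper itself gives no proof of this proposition: it is simply quoted as Theorem 2.5 of \cite{majidcocycle}, so there is no in-paper argument to compare with, and your direct verification is the natural one. The only step worth spelling out is the combined identity $m_{(-1)} \tensorc T(m_{(0)}) \tensorc m_{(1)} = T(m)_{(-1)} \tensorc T(m)_{(0)} \tensorc T(m)_{(1)}$, which you invoke via \eqref{28thaugust20194}: it follows by applying right covariance of $T$ to ${}_M\Delta(m_{(0)})$ and then left covariance of $T$, using the compatibility $(\id \tensorc {}_M\Delta)\Delta_M = (\Delta_M \tensorc \id){}_M\Delta$; once recorded, your substitution into \eqref{9thmay20198} (respectively \eqref{9thmay20197}) is an application of a fixed linear map on $\A \tensorc N \tensorc \A$ determined by $(\Delta \tensorc \id)\Delta(a)$, so the step is legitimate. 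Parts (i) and (iii) are indeed immediate from \eqref{10thseptember20191sm} together with $\Delta_\gamma = \Delta$ and $T_\gamma = T$ as $\IC$-linear maps, and you are right that the cocycle identities \eqref{(iii)} and \eqref{(iiiprime)} are not needed beyond their use in Proposition \ref{8thmay20191}.
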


\subsection{Deformation of the braiding map} \label{18thseptember20192}

Suppose $ \E $ is a bicovariant $\A$-bimodule, $ \sigma $ be the bicovariant braiding map of Proposition \ref{4thmay20193} and $ g $ be a bi-invariant metric. Then Proposition \ref{11thjuly20192} implies that we have deformed maps $ \sigma_\gamma$ and $ g_\gamma. $ In this subsection, we study the map $ \sigma_\gamma. $ The map $ g_\gamma $ will be discussed in the next subsection. We will need the following result:
\begin{prop} \label{11thjuly20191} (Theorem 2.5 of \cite{majidcocycle})
	Let $(M, \Delta_{M}, {}_M\Delta)$ and $(N, \Delta_{N}, {}_N\Delta)$ be bicovariant bimodules over a Hopf algebra $\A$ and $\gamma$ be a cocycle as above. Then there exists a bicovariant $\A_\gamma$- bimodule isomorphism 
	$$ \xi: M_\gamma \otimes_{\A_\gamma} N_\gamma \rightarrow (M \tensora N)_\gamma. $$
	The isomorphism $\xi$ and its inverse are respectively given by
	\begin{equation*}
	\begin{aligned}
	\xi (m \otimes_{\A_\gamma} n) & = \gamma(m_{(-1)} \tensorc n_{(-1)}) m_{(0)} \tensora n_{(0)} \overline{\gamma}(m_{(1)} \tensorc n_{(1)})\\
	\xi^{-1} (m \tensora n) & = \overline{\gamma}(m_{(-1)} \tensorc n_{(-1)}) m_{(0)} \otimes_{\A_\gamma} n_{(0)} {\gamma}(m_{(1)} \tensorc n_{(1)})
	\end{aligned}
	\end{equation*}
\end{prop}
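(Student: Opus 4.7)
The plan is to verify four items in order: (a) the formula defining $\xi$ descends from $M_\gamma \tensorc N_\gamma$ to the balanced tensor product $M_\gamma \otimes_{\A_\gamma} N_\gamma$; (b) the resulting map is $\A_\gamma$-bilinear; (c) $\xi$ is bicovariant; and (d) the proposed formula for $\xi^{-1}$ is a two-sided inverse.

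First I would introduce the auxiliary $\mathbb{C}$-linear map $\widetilde{\xi}: M \tensorc N \to M \tensora N$ defined by the same expression as $\xi$, and verify the middle-linearity relation
$$ \widetilde{\xi}((m *_\gamma a) \tensorc n) = \widetilde{\xi}(m \tensorc (a *_\gamma n)). $$
Expanding both sides using \eqref{9thmay20197} and \eqref{9thmay20198}, and then propagating $a$'s coproduct through the Sweedler legs of $m$ and $n$ via the bicomodule compatibility \eqref{28thaugust20194}, both sides become sums of terms carrying four scalar factors built from $\gamma$ and $\overline{\gamma}$; repeated use of the two cocycle identities \eqref{(iii)} and \eqref{(iiiprime)} regroups these factors so that the two sides agree. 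This is the main technical step and the principal obstacle.

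Once (a) is in hand, (b) follows by the same technique. To verify left $\A_\gamma$-linearity $\xi(a *_\gamma m \otimes_{\A_\gamma} n) = a *_\gamma \xi(m \otimes_{\A_\gamma} n)$, I would expand both sides and apply \eqref{(iii)} to absorb the inner $\gamma$-factor produced by $a *_\gamma m$ into the outer $\gamma$-factor of $\xi$, and symmetrically \eqref{(iiiprime)} on the $\overline{\gamma}$-side; right linearity is analogous. For (c), \eqref{10thseptember20191sm} tells us that the coactions on $M_\gamma, N_\gamma, (M \tensora N)_\gamma$ coincide with those on $M, N, M \tensora N$, and since $\gamma, \overline{\gamma}$ are scalar-valued they commute past $\id \tensorc (-)$; coassociativity of the left and right coactions of $m$ and $n$ then delivers bicovariance directly.

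Finally, for (d), composing $\xi$ with the formula for $\xi^{-1}$ and using coassociativity of the coactions of $M$ and $N$ groups the cocycle factors into products of the form $\gamma(u_{(1)} \tensorc v_{(1)}) \overline{\gamma}(u_{(2)} \tensorc v_{(2)}) = \epsilon(u)\epsilon(v)$, which collapse via the counit axiom back to $m \otimes_{\A_\gamma} n$ (respectively $m \tensora n$). The main obstacle, as noted, is step (a): the verification of middle linearity requires careful bookkeeping of four Sweedler factors on each side and the simultaneous application of both cocycle identities; all subsequent items are essentially variants of the same calculation.
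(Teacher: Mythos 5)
Your plan is sound, but note that the paper does not prove this statement at all: it is quoted verbatim as Theorem 2.5 of \cite{majidcocycle}, and in Remark \ref{26thjune20202} the authors place it in the categorical framework (cotwisting gives a monoidal equivalence between $\bimodbicov$ and $\bimodbicovgamma$, a special case of Schauenburg/Bichon), where $\xi$ is precisely the tensor-structure constraint of that equivalence. Your route is instead a direct, self-contained verification, and it does work: the four items (descent to the balanced tensor product, $\A_\gamma$-bilinearity, bicovariance, invertibility) are exactly what must be checked, and the identities you invoke --- \eqref{9thmay20197}, \eqref{9thmay20198}, the compatibility \eqref{28thaugust20194}, the cocycle identities \eqref{(iii)}, \eqref{(iiiprime)}, and convolution invertibility $\gamma(u_{(1)} \tensorc v_{(1)})\overline{\gamma}(u_{(2)} \tensorc v_{(2)}) = \epsilon(u)\epsilon(v)$ --- are the right ones. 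What your approach buys is an explicit computation independent of the twisting machinery; what the paper's citation buys is brevity and the conceptual statement that $\xi$ is part of a braided monoidal isomorphism of categories. Two small points you should make explicit when writing step (a) and (d) out in full: matching the middle legs in the verification of $\widetilde{\xi}((m *_\gamma a) \tensorc n) = \widetilde{\xi}(m \tensorc (a *_\gamma n))$ uses the $\A$-balancedness of the target, i.e.\ $m_{(0)}a' \tensora n_{(0)} = m_{(0)} \tensora a'n_{(0)}$ for the \emph{undeformed} actions; and the formula for $\xi^{-1}$ must itself be shown to be well defined on $M \tensora N$ (it is not automatic from being a pointwise inverse on representatives), which follows by the same computation as (a) with $\gamma$ and $\overline{\gamma}$ interchanged, i.e.\ by viewing $\A$ as the deformation of $\A_\gamma$ by the inverse cocycle.
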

As an illustration, we make the following computation which will be needed later in this subsection:
\begin{lemma} \label{14thseptember20191}
	Suppose $ \omega \in \zeroE, \eta \in \Ezero. $ Then the following equation holds:
	\begin{eqnarray*}
		\xi^{-1} (\gamma (\eta_{(-1)} \tensorc 1) \eta_{(0)} \tensora \omega_{(0)} \overline{\gamma} (1 \tensorc \omega_{(1)})) &=& \eta \otimes_{\A_\gamma} \omega.
	\end{eqnarray*}
\end{lemma}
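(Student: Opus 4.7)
The plan is a straightforward unitality check: the argument of $\xi^{-1}$ appearing on the left-hand side already collapses to $\eta \tensora \omega$, and then applying the explicit formula for $\xi^{-1}$ from Proposition \ref{11thjuly20191} collapses again to $\eta \otimes_{\A_\gamma} \omega$. So I will organize the proof in two symmetric stages, each of which reduces to the unitality of $\gamma$ (and of its convolution inverse $\overline{\gamma}$) combined with the counit axiom for the comodule coactions.

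First, I will simplify the input. Using $\gamma(\eta_{(-1)} \tensorc 1) = \epsilon(\eta_{(-1)})$ and $\overline{\gamma}(1 \tensorc \omega_{(1)}) = \epsilon(\omega_{(1)})$ (unitality of $\gamma$ and $\overline\gamma$, both recorded in the paper), together with the counit axioms $\epsilon(\eta_{(-1)}) \eta_{(0)} = \eta$ and $\omega_{(0)} \epsilon(\omega_{(1)}) = \omega$, the element
\[ \gamma(\eta_{(-1)} \tensorc 1) \eta_{(0)} \tensora \omega_{(0)} \overline{\gamma}(1 \tensorc \omega_{(1)}) \]
immediately collapses to $\eta \tensora \omega$.

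Second, I will apply the formula from Proposition \ref{11thjuly20191}:
\[ \xi^{-1}(m \tensora n) = \overline{\gamma}(m_{(-1)} \tensorc n_{(-1)}) m_{(0)} \otimes_{\A_\gamma} n_{(0)} \gamma(m_{(1)} \tensorc n_{(1)}) \]
with $m = \eta, n = \omega$. Because $\omega \in \zeroE$ we have $\omega_{(-1)} = 1, \omega_{(0)} = \omega$, and because $\eta \in \Ezero$ we have $\eta_{(0)} = \eta, \eta_{(1)} = 1$; here I will implicitly use the bicovariance compatibility $(\id \tensorc {}_\E \Delta)\Delta_\E = (\Delta_\E \tensorc \id){}_\E\Delta$ to interpret the combined Sweedler triple $\eta_{(-1)} \tensorc \eta_{(0)} \tensorc \eta_{(1)}$ for a right-invariant $\eta$, and dually for $\omega$. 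The formula then reads $\overline{\gamma}(\eta_{(-1)} \tensorc 1) \eta_{(0)} \otimes_{\A_\gamma} \omega_{(0)} \gamma(1 \tensorc \omega_{(1)})$, which by the same unitality and counit argument equals $\eta \otimes_{\A_\gamma} \omega$.

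There is no real obstacle here — the lemma is a compatibility check which the authors are using to illustrate how the deformation cocycles in the formula for $\xi^{-1}$ disappear on tensors of one-sided invariant elements. The only mildly delicate point is ensuring that the Sweedler indices in the statement match those in the formula for $\xi^{-1}$, which is guaranteed by the bicovariance compatibility and by the unitality of both $\gamma$ and $\overline\gamma$.
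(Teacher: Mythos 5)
Your proof is correct and follows essentially the same route as the paper: a direct computation with the explicit formula for $\xi^{-1}$ from Proposition \ref{11thjuly20191}, using the unitality of $\gamma$ and $\overline{\gamma}$, the counit axioms, and the left/right invariance of $\omega$ and $\eta$. The only (harmless) difference is that you collapse the scalar prefactors to counits before applying $\xi^{-1}$, whereas the paper pulls them out by $\IC$-linearity and applies the formula to $\eta_{(0)} \tensora \omega_{(0)}$, which is why its computation carries the iterated Sweedler indices $\eta_{(-2)}$ and $\omega_{(2)}$.
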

\begin{proof}
	Let us first clarify that we view $ \gamma (\eta_{(-1)} \tensorc 1) \eta_{(0)} \tensora \omega_{(0)} \overline{\gamma} (1 \tensorc \omega_{(1)}) $
	as an element in $ (\E \tensora \E)_\gamma. $ 
	Then the equation holds because of the following computation:
	\begin{equation*}
	\begin{aligned}
	&\xi^{-1} (\gamma (\eta_{(-1)} \tensorc 1) \eta_{(0)} \tensora \omega_{(0)} \overline{\gamma} (1 \tensorc \omega_{(1)}))\\ =& \gamma (\eta_{(-1)} \tensorc 1) \xi^{-1} ( \eta_{(0)} \tensora \omega_{(0)}) \overline{\gamma} (1 \tensorc \omega_{(1)}) \\
	=& \gamma (\eta_{(-2)} \tensorc 1) \overline{\gamma}(\eta_{(-1)} \tensorc 1) \eta_{(0)} \otimes_{\A_\gamma} \omega_{(0)} \gamma(1 \tensorc \omega_{(1)}) \overline{\gamma} (1 \tensorc \omega_{(2)})\\ ~ &{\rm (} {\rm since} ~ \omega \in \zeroE, ~ \eta \in \Ezero {\rm)}\\
	=& \epsilon(\eta_{(-2)}) \epsilon(\eta_{(-1)}) \eta_{(0)} \otimes_{\A_\gamma} \omega_{(0)} \epsilon (\omega_{(1)}) \epsilon (\omega_{(2)}) \ {\rm (since \ \overline{\gamma} \ and \ \gamma \ are \ normalised)}\\
	=& \eta \otimes_{\A_\gamma} \omega.
	\end{aligned}
	\end{equation*}
\end{proof}
Now, we are in a position to study the map $ \sigma_\gamma. $ By Proposition \ref{8thmay20191}, $ \E_\gamma $ is a bicovariant $ \A_\gamma $-bimodule. Then Proposition \ref{4thmay20193} guarantees the existence of a canonical braiding from $\E_\gamma \otimes_{\A_\gamma} \E_\gamma$ to itself. We show that this map is nothing but the deformation $\sigma_\gamma$ of the map $\sigma$ associated with the bicovariant $\A$-bimodule $\E$. By the definition of $ \sigma_\gamma, $ it is a map from $ (\E \tensora \E)_\gamma $ to $ (\E \tensora \E )_\gamma. $ However, by virtue of Proposition \ref{11thjuly20191}, the map $ \xi $ defines an isomorphism from $ \E_\gamma \otimes_{\A_\gamma} \E_\gamma $ to $ (\E \tensora \E )_\gamma. $ By an abuse of notation, we will denote the map 
$$ \xi^{-1} \sigma_\gamma \xi : \E_\gamma \otimes_{\A_\gamma} \E_\gamma \rightarrow \E_\gamma \otimes_{\A_\gamma} \E_\gamma $$
by the symbol $ \sigma_\gamma $ again. 

\begin{thm} \label{28thaugust20196} (Theorem 2.5 of \cite{majidcocycle})
	Let $\E$ be a bicovariant $\A$-bimodule and $\gamma$ be a cocycle as above. Then the deformation $\sigma_\gamma$ of $\sigma$ is the unique bicovariant $\A_\gamma$-bimodule braiding map on $\E_\gamma$ given by Proposition \ref{4thmay20193}.
\end{thm}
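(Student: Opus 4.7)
The plan is to invoke the uniqueness clause of Proposition \ref{4thmay20193} applied to the bicovariant $\A_\gamma$-bimodule $\E_\gamma$. Thus it suffices to check that the map $\xi^{-1} \sigma_\gamma \xi$ (which by abuse of notation we still call $\sigma_\gamma$) is (a) a bicovariant $\A_\gamma$-bimodule map from $\E_\gamma \otimes_{\A_\gamma} \E_\gamma$ to itself and (b) satisfies the defining identity $\sigma_\gamma(\omega \otimes_{\A_\gamma} \eta) = \eta \otimes_{\A_\gamma} \omega$ for $\omega$ left-invariant in $\E_\gamma$ and $\eta$ right-invariant in $\E_\gamma$. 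A crucial preliminary observation is that, by the formula \eqref{10thseptember20191sm}, left- and right-invariance in $\E_\gamma$ are governed by the same coactions as in $\E$; hence ${}_0 \E_\gamma = {}_0 \E$ and $(\E_\gamma)_0 = \E_0$ as subspaces.

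Point (a) is quick: the original braiding $\sigma$ is bicovariant and $\A$-bilinear, so Proposition \ref{11thjuly20192} says that $\sigma_\gamma: (\E \tensora \E)_\gamma \to (\E \tensora \E)_\gamma$ is a bicovariant and $\A_\gamma$-bilinear map. Since $\xi$ is a bicovariant $\A_\gamma$-bimodule isomorphism by Proposition \ref{11thjuly20191}, the conjugate $\xi^{-1} \sigma_\gamma \xi$ inherits both properties on $\E_\gamma \otimes_{\A_\gamma} \E_\gamma$.

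The core computation is (b). Fix $\omega \in {}_0\E$ and $\eta \in \E_0$. Using the identity \eqref{28thaugust20194} together with $\Delta_\E(\omega) = 1 \tensorc \omega$ and ${}_\E\Delta(\eta) = \eta \tensorc 1$, one has $\omega_{(-1)} \tensorc \omega_{(0)} \tensorc \omega_{(1)} = 1 \tensorc \omega_{(0)} \tensorc \omega_{(1)}$ and $\eta_{(-1)} \tensorc \eta_{(0)} \tensorc \eta_{(1)} = \eta_{(-1)} \tensorc \eta_{(0)} \tensorc 1$. Plugging these into the formula for $\xi$ and using that $\gamma, \overline{\gamma}$ are unital gives $\xi(\omega \otimes_{\A_\gamma} \eta) = \epsilon(\eta_{(-1)}) \omega_{(0)} \tensora \eta_{(0)} \epsilon(\omega_{(1)}) = \omega \tensora \eta$ by the counit axiom for the comodule coactions. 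Next, $\sigma_\gamma = \sigma$ as $\IC$-linear maps (Proposition \ref{11thjuly20192}), so $\sigma_\gamma(\omega \tensora \eta) = \sigma(\omega \tensora \eta) = \eta \tensora \omega$ by \eqref{30thapril20191}. Finally, applying $\xi^{-1}$ to $\eta \tensora \omega$ and using the same counit reduction (an immediate special case of Lemma \ref{14thseptember20191}) yields $\eta \otimes_{\A_\gamma} \omega$. Combining the three steps gives $\sigma_\gamma(\omega \otimes_{\A_\gamma} \eta) = \eta \otimes_{\A_\gamma} \omega$, and the uniqueness in Proposition \ref{4thmay20193} concludes the proof.

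The main obstacle I anticipate is purely bookkeeping: keeping straight the various roles of $\omega_{(-1)}, \omega_{(0)}, \omega_{(1)}$ and $\eta_{(-1)}, \eta_{(0)}, \eta_{(1)}$ and seeing that the one-sided invariance is exactly what is needed for the cocycle factors to collapse to counits at both ends of $\xi$ and $\xi^{-1}$. No genuine difficulty beyond this, since the bicovariance and $\A_\gamma$-bilinearity are handed to us by the functoriality recorded in Proposition \ref{11thjuly20192}.
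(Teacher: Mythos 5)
Your proof is correct and follows essentially the same route as the paper: reduce to the uniqueness clause of Proposition \ref{4thmay20193} via ${}_0(\E_\gamma) = {}_0\E$, $(\E_\gamma)_0 = \E_0$, get bicovariance and $\A_\gamma$-bilinearity from Proposition \ref{11thjuly20192}, and verify $\sigma_\gamma(\omega \otimes_{\A_\gamma} \eta) = \eta \otimes_{\A_\gamma} \omega$ by pushing through $\xi$ and $\xi^{-1}$ using unitality of $\gamma,\overline{\gamma}$. The only cosmetic difference is that you collapse $\xi(\omega \otimes_{\A_\gamma} \eta)$ all the way to $\omega \tensora \eta$ before applying $\sigma$, whereas the paper keeps the Sweedler legs and invokes Lemma \ref{14thseptember20191} at the end.
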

\begin{proof}
	Since $\sigma$ is a bicovariant $\A$-bimodule map from $\E \tensora \E$ to itself, part (ii) of Proposition \ref{11thjuly20192} implies that $\sigma_\gamma$ is a bicovariant $\A_\gamma$-bimodule map from $(\E \tensora \E)_\gamma \cong \E_\gamma \otimes_{\A_\gamma} \E_\gamma$ to itself. By Proposition \ref{4thmay20193}, there exists a unique $\A_\gamma$-bimodule map $\sigma^\prime$ from $\E_\gamma \otimes_{\A_\gamma} \E_\gamma$ to itself such that $\sigma^\prime(\omega \otimes_{\A_\gamma} \eta) = \eta \otimes_{\A_\gamma} \omega$ for all $\omega$ in ${}_0(\E_\gamma)$, $\eta$ in $(\E_\gamma)_0$. 
	
	Since ${}_0(\E_\gamma) = \zeroE$ and $(\E_\gamma)_0 = \Ezero$, it is enough to prove that $\sigma_\gamma(\omega \otimes_{\A_\gamma} \eta) = \eta \otimes_{\A_\gamma} \omega$ for all $\omega$ in $\zeroE$, $\eta$ in $\Ezero$.
	
	We will need the concrete isomorphism between $\E_\gamma \otimes_{\A_\gamma} \E_\gamma$ and $(\E \tensora \E)_\gamma$ defined in Proposition \ref{11thjuly20191}. Since $\omega$ is in $\zeroE$ and $\eta$ is in $\Ezero$, this isomorphism maps the element $\omega \otimes_{\A_\gamma} \eta$ to $\gamma(1 \tensorc \eta_{(-1)}) \omega_{(0)} \tensora \eta_{(0)} \overline{\gamma}(\omega_{(1)} \tensorc 1)$. Then, by the definition of $\sigma_\gamma$, we compute the following:
	\begin{equation*}
	\begin{aligned}
	& \sigma_\gamma(\omega \otimes_{\A_\gamma} \eta)
	= \sigma(\gamma(1 \tensorc \eta_{(-1)}) \omega_{(0)}\tensora \eta_{(0)} \overline{\gamma} (\omega_{(1)} \tensorc 1)) \\
	=& \sigma(\epsilon (\eta_{(-1)}) \omega_{(0)}\tensora \eta_{(0)} \epsilon(\omega_{(1)}))
	= \epsilon (\eta_{(-1)}) \eta_{(0)}\tensora \omega_{(0)} \epsilon(\omega_{(1)})\\
	=& \gamma (\eta_{(-1)} \tensorc 1) \eta_{(0)}\tensora \omega_{(0)} \overline{\gamma}(1 \tensorc \omega_{(1)})
	= \eta \otimes_{\A_\gamma} \omega,
	\end{aligned}	
	\end{equation*}
	where, in the last step we have used Lemma \ref{14thseptember20191}.
\end{proof}

\brmrk \label{26thjune20202}
Proposition \ref{8thmay20191}, Proposition \ref{11thjuly20192}, Proposition \ref{11thjuly20191} and Theorem \ref{28thaugust20196} together imply that the categories $ \bimodbicov $ and $\bimodbicovgamma$ are isomorphic as braided monoidal categories. This was the content of Theorem 2.5 of \cite{majidcocycle}. The referee has pointed out that this is a special case of a much more generalized result of Bichon (Theorem 6.1 of \cite{bichon}) which says that if two Hopf algebras are monoidally equivalent, then the corresponding categories of right-right Yetter Drinfeld modules are also monoidally equivalent.

However, in Theorem \ref{28thaugust20196}, we have proved in addition that the braiding on $\bimodbicovgamma$ is precisely the Woronowicz braiding of Proposition \ref{4thmay20193}.
\ermrk

\begin{corr} \label{18thsep20194}
	If the unique bicovariant $\A$-bimodule braiding map $\sigma$ for a bicovariant $\A$-bimodule $\E$ satisfies the equation $\sigma^2 = 1$, then the bicovariant $\A_\gamma$-bimodule braiding map $ \sigma_\gamma$ for the bicovariant $\A_\gamma$-bimodule $\E_\gamma$ also satisfies $\sigma_\gamma^2 = 1$.
	
	In particular, if $\A$ is the commutative Hopf algebra of regular functions on a compact semisimple Lie group $G$ and $\E$ is its canonical space of one-forms, then the braiding map $\sigma_\gamma$ for $\E_\gamma$ satisfies $\sigma_\gamma^2 = 1$.
\end{corr}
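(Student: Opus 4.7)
The plan is to leverage the functoriality of cocycle deformation, as captured by Proposition \ref{11thjuly20192}, together with the identification of $\sigma_\gamma$ as the Woronowicz braiding on $\E_\gamma$ provided by Theorem \ref{28thaugust20196}. I anticipate no serious obstacle; the argument amounts to chasing the identity $\sigma \circ \sigma = \id$ through the deformation functor, and the only book-keeping to watch is the distinction between the two avatars of $\sigma_\gamma$ (on $(\E \tensora \E)_\gamma$ versus on $\E_\gamma \otimes_{\A_\gamma} \E_\gamma$) which are identified via $\xi$.

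First I would observe that $\sigma$ is bicovariant by Proposition \ref{4thmay20193}, so its cocycle deformation $\sigma_\gamma : (\E \tensora \E)_\gamma \to (\E \tensora \E)_\gamma$ is a well-defined bicovariant $\A_\gamma$-bimodule map. Applying part (iii) of Proposition \ref{11thjuly20192} with $S = T = \sigma$, together with the hypothesis $\sigma \circ \sigma = \id_{\E \tensora \E}$, yields
\[ \sigma_\gamma \circ \sigma_\gamma \;=\; (\sigma \circ \sigma)_\gamma \;=\; (\id_{\E \tensora \E})_\gamma. \]
Since the cocycle deformation $T_\gamma$ of a bicovariant $\IC$-linear map $T$ equals $T$ as a $\IC$-linear map (by the very definition recalled in Proposition \ref{11thjuly20192}), the right-hand side is just $\id_{(\E \tensora \E)_\gamma}$. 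Transporting through the isomorphism $\xi : \E_\gamma \otimes_{\A_\gamma} \E_\gamma \to (\E \tensora \E)_\gamma$ of Proposition \ref{11thjuly20191}, and recalling the abuse of notation preceding Theorem \ref{28thaugust20196} by which the braiding on $\E_\gamma \otimes_{\A_\gamma} \E_\gamma$ is $\xi^{-1} \sigma_\gamma \xi$, I compute
\[ \sigma_\gamma^{\,2} \;=\; \xi^{-1} \sigma_\gamma \xi \circ \xi^{-1} \sigma_\gamma \xi \;=\; \xi^{-1}\, \sigma_\gamma^{\,2}\, \xi \;=\; \id_{\E_\gamma \otimes_{\A_\gamma} \E_\gamma}, \]
which is the first assertion.

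For the ``in particular'' statement, I would argue that when $\A = \mathcal{O}(G)$ is the commutative Hopf algebra of regular functions on $G$ and $\E$ is the canonical bicovariant first-order calculus of one-forms on $G$, the Woronowicz braiding $\sigma$ of Proposition \ref{4thmay20193} reduces to the classical flip $\sigmacan$. Indeed, commutativity of $\A$ ensures that $\sigmacan$ is a well-defined $\A$-bimodule map on $\E \tensora \E$; it is clearly bicovariant, and it sends $\omega \tensora \eta$ to $\eta \tensora \omega$ on any pair of invariant elements, so it satisfies the defining property of Proposition \ref{4thmay20193} and must coincide with $\sigma$ by uniqueness. Since $\sigmacan$ trivially satisfies $\sigma^{2} = \id$, the first part of the corollary applied to this $\E$ delivers $\sigma_\gamma^{\,2} = \id$, completing the proof.
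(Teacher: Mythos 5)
Your proposal is correct and follows essentially the same route as the paper: the identity $\sigma_\gamma^2 = 1$ comes from Theorem \ref{28thaugust20196} together with part (iii) of Proposition \ref{11thjuly20192} applied to $\sigma \circ \sigma = \id$, and the classical case reduces to noting that the Woronowicz braiding of the canonical calculus is the flip. Your extra bookkeeping with $\xi$ and the uniqueness argument identifying $\sigma$ with $\sigmacan$ merely spell out steps the paper asserts without detail.
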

\begin{proof}
	By Theorem \ref{28thaugust20196}, $\sigma_\gamma$ is the unique braiding map for the bicovariant $\A_\gamma$-bimodule $\E_\gamma$. Since, by our hypothesis, $\sigma^2 = 1$, the deformed map $\sigma_\gamma$ also satisfies $\sigma_\gamma^2 = 1$ by part (iii) of Proposition \ref{11thjuly20192}.\\
	Next, if $\A$ is a commutative Hopf algebra as in the statement of the corollary and $\E$ is its canonical space of one-forms, then we know that the braiding map $\sigma$ is just the flip map, i.e. for all $e_1, e_2$ in $\E$,
	\[ \sigma(e_1 \tensora e_2) = e_2 \tensora e_1, \]
	and hence it satisfies $\sigma^2 = 1$. Therefore, for every cocycle deformation $\E_\gamma$ of $\E$, the corresponding braiding map satisfies $\sigma_\gamma^2 = 1$.
\end{proof}

\subsection{Pseudo-Riemannian bi-invariant metrics on $\E_\gamma$}

Suppose $ \E $ is a bicovariant $\A$-bimodule and $\E_\gamma $ be its cocycle deformation as above. The goal of this subsection is to prove that a pseudo-Riemannian bi-invariant metric on $ \E $ naturally deforms to a pseudo-Riemannian bi-invariant metric on $\E_\gamma. $ Since $ g $ is a bicovariant (i.e, both left and right covariant) map from the bicovariant bimodule $ \E \tensora \E $ to itself, then by Proposition \ref{11thjuly20192}, we have a right $ \A_\gamma $-linear bicovariant map $ g_{\gamma} $ from $ \E_\gamma \otimes_{\A_\gamma} \E_\gamma $ to itself. We need to check the conditions (i) and (ii) of Definition \ref{24thmay20191} for the map $ g_\gamma. $

The proof of the equality $ g_\gamma = g_\gamma \circ \sigma_\gamma $ is straightforward. However, checking condition (ii), i.e, verifying that the map $ V_{g_\gamma} $ is an isomorphism onto its image needs some work. The root of the problem is that we do not yet know whether $ \E^* = V_g ( \E ).$ Our strategy to verify condition (ii) is the following: we show that the right $ \A $-module $ V_g ( \E ) $ is a bicovariant right $\A$-module (see Definition \ref{28thaugust20195}) in a natural way. Let us remark that since the map $ g $ (hence $ V_g $) is not left $\A$-linear, $ V_g ( \E ) $ need not be a left $\A$-module. Since bicovariant right $ \A $-modules and bicovariant maps between them can be deformed (Proposition \ref{28thaugust20191sm}), the map $ V_g $ deforms to a right $ \A_\gamma $-linear isomorphism from $ \E_\gamma $ to $ (V_g (\E) )_\gamma. $ Then in Theorem \ref{29thaugust20191sm}, we show that $ (V_g)_\gamma $ coincides with the map $ V_{g_\gamma} $ and the latter is an isomorphism onto its image. This is the only subsection where we use the theory of bicovariant right modules (as opposed to bicovariant bimodules). 

\begin{defn} \label{28thaugust20195}
	Let $M$ be a right $\A$-module, and $\Delta_M : M \to \A \tensorc M$ and ${}_M \Delta: M \to M \tensorc \A$ be $\mathbb{C}$-linear maps. We say that $(M,\Delta_M, {}_M \Delta)$ is a bicovariant right $\A$-module if the triplet is an object of the category $ \rightmodbicov, $ i.e, 
	\begin{itemize}
		\item[(i)] $(M, \Delta_M)$ is a left $\A$-comodule,\\
		\item[(ii)] $(M, {}_M \Delta)$ is a right $\A$-comodule,\\
		\item[(iii)] $ (\id \tensorc {}_M\Delta) \Delta_M = (\Delta_M \tensorc \id) {}_M\Delta $, \\
		\item[(iv)] For any $a$ in $\A$ and $m$ in $M$, $$ \Delta_M(m a)=\Delta_M(m)\Delta(a), \quad {}_M \Delta(ma) = {}_M \Delta(m) \Delta(a).$$
	\end{itemize}
\end{defn}	

For the rest of the subsection, $ \E $ will denote a bicovariant $ \A $-bimodule. Moreover, $ \{ \omega_i \}_i $ will denote a basis of $ \zeroE $ and $ \{ \omega^*_i \}_i $ the dual basis, i.e, $ \omega^*_i (\omega_j) = \delta_{ij}. $

Let us recall that \eqref{26thaugust20191} implies the existence of elements $R_{ij}$ in $\A$ such that
\begin{equation} \label{26thaugust20192}
{}_\E \Delta(\omega_i) = \sum_{ij} \omega_j \tensorc R_{ji}.
\end{equation}
We want to show that $ V_g (\E) $ is a bicovariant right $\A$-module. To this end, we recall that (Lemma \ref{28thaugust2019night1} ) $ V_g (\E) $ is a free right $ \A $-module with basis $ \{ \omega^*_i \}_i. $ This allows us to make the following definition.

\begin{defn} \label{26thaugust20191sm}
	Let $\{ \omega_i \}_i$ and $ \{ \omega^*_i \}_i $ be as above and $g$ a bi-invariant pseudo-Riemannian metric on $\E $. Then we can endow $V_g(\E)$ with a left-coaction $\Delta_{V_g(\E)} : V_g(\E) \to \A \tensorc V_g(\E)$ and a right-coaction ${}_{V_g(\E)} \Delta : V_g(\E) \to V_g(\E) \tensorc \A$, defined by the formulas
	\begin{equation} \label{29thaugust20192jb}
	\Delta_{V_g(\E)}(\sum_i \omega^\ast_i a_i) = \sum_i (1 \tensorc \omega^\ast_i) \Delta(a_i), ~	
	{}_{V_g(\E)} \Delta(\sum_i \omega^\ast_i a_i) = \sum_{ij} (\omega^\ast_j \tensorc S(R_{ij}))\Delta(a_i),
	\end{equation}
	where the elements $R_{ij}$ are as in \eqref{26thaugust20192}.
\end{defn}
Then we have the following result.

\begin{prop} \label{21staugust20191}
	The triplet $ (V_g (\E), \Delta_{V_g(\E)}, {}_{V_g(\E)} \Delta ) $ is a bicovariant right $ \A $-module. Moreover, the map $ V_g: \E \rightarrow V_g (\E) $ is bicovariant, i.e, we have
	\begin{equation} \label{29thaugust20191}
	\Delta_{V_g(\E)} (V_g(e)) = (\id \tensorc V_g) \Delta_\E(e), ~
	{}_{V_g(\E)} \Delta (V_g(e)) = (V_g \tensorc \id) {}_\E \Delta(e).
	\end{equation}
\end{prop}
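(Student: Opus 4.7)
The proof splits into two parts: verifying that $(V_g(\E), \Delta_{V_g(\E)}, {}_{V_g(\E)}\Delta)$ satisfies the four axioms of Definition \ref{28thaugust20195}, and then checking the two bicovariance equations \eqref{29thaugust20191} for $V_g$. Throughout, I would work with the basis $\{\omega_i^*\}_i$ of the free right $\A$-module $V_g(\E)$ (Lemma \ref{28thaugust2019night1}), and use that $g_{ij} \in \IC \cdot 1$ by Lemma \ref{14thfeb20191}.

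\textbf{Preliminary identities.} First, I would record that the elements $R_{ij} \in \A$ from \eqref{26thaugust20192} form a matrix corepresentation of $\A$. Concretely, applying the right comodule axiom $({}_\E\Delta \tensorc \id){}_\E\Delta = (\id \tensorc \Delta){}_\E\Delta$ and the counit axiom to each $\omega_i$ yields
\[ \Delta(R_{ij}) = \sum_k R_{ik} \tensorc R_{kj}, \qquad \epsilon(R_{ij}) = \delta_{ij}. \]
Applying $m(S \tensorc \id)\Delta$ and $m(\id \tensorc S)\Delta$ to $R_{ij}$ then gives the antipode identities
\[ \sum_k S(R_{ik}) R_{kj} = \delta_{ij} = \sum_k R_{ik} S(R_{kj}). \]
These identities will be used repeatedly.

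\textbf{Comodule axioms.} Conditions (i) and (ii) of Definition \ref{28thaugust20195} reduce to direct verifications on the basis vectors $\omega_i^*$. For $\Delta_{V_g(\E)}$, the coassociativity and counit axioms follow directly from the same axioms for $(\A, \Delta)$. For ${}_{V_g(\E)}\Delta$, writing out both sides of the right comodule axiom on $\omega_i^*$ and using the identity $\Delta \circ S = (S \tensorc S) \circ \sigma^{\mathrm{can}} \circ \Delta$ together with the corepresentation property above makes the two sides agree after a relabeling of summation indices; the counit axiom follows from $\epsilon \circ S = \epsilon$ and $\epsilon(R_{ij}) = \delta_{ij}$. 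Condition (iii), the compatibility between the two coactions on $\omega_i^*$, is immediate because $\Delta_{V_g(\E)}(\omega_j^*) = 1 \tensorc \omega_j^*$ acts trivially in the $V_g(\E)$ slot. Condition (iv), the compatibility with the right $\A$-module structure, is built into the defining formulas \eqref{29thaugust20192jb}.

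\textbf{Bicovariance of $V_g$.} For the left coaction, since $g_{ij} \in \IC \cdot 1$ and hence $\Delta(g_{ij}) = g_{ij} \tensorc 1$, expanding both sides of the identity $\Delta_{V_g(\E)}(V_g(\omega_i a)) = (\id \tensorc V_g)\Delta_\E(\omega_i a)$ via right $\A$-linearity of $V_g$ yields the same expression $\sum_j g_{ij}\, a_{(1)} \tensorc \omega_j^* a_{(2)}$ on both sides. For the right coaction, the same kind of expansion reduces the required identity to
\[ \sum_j g_{ij}\, S(R_{jk}) = \sum_j g_{jk}\, R_{ji} \qquad \text{for all } i,k. \]
This is where the bi-invariance hypothesis enters: multiplying both sides on the right by $R_{km}$, summing over $k$, and using $\sum_k S(R_{jk}) R_{km} = \delta_{jm}$ shows that the displayed equation is equivalent to $g_{im} = \sum_{j,k} g_{jk} R_{ji} R_{km}$, which is precisely \eqref{6thnov20191} from Proposition \ref{27thjune20197}; the reverse direction, needed to actually derive the displayed equation, is obtained by multiplying \eqref{6thnov20191} by $S(R_{mn})$ and summing over $m$. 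I expect this reduction to be the main obstacle in the argument, since it is the only place where the full bi-invariance of $g$ (not just left-invariance) is used; everything else is bookkeeping with the matrix-corepresentation identities collected at the outset.
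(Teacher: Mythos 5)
Your proposal is correct and follows essentially the same route as the paper: expand everything over the free basis $\{\omega_i^*\}$ of $V_g(\E)$ using $V_g(\omega_i)=\sum_j g_{ij}\omega_j^*$ with scalar $g_{ij}$, and reduce right-covariance to the identity $\sum_j g_{ij}S(R_{jk})=\sum_j g_{jk}R_{ji}$, which you derive from the bi-invariance relation \eqref{6thnov20191} via the antipode identities exactly as in \eqref{6thnov20192}. The only difference is that you spell out the comodule axioms for ${}_{V_g(\E)}\Delta$ (corepresentation property of $R_{ij}$, $\Delta\circ S=(S\tensorc S)\circ\sigmacan\circ\Delta$), which the paper treats as immediate from the definitions.
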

\begin{proof} The fact that $ (V_g (\E), \Delta_{V_g(\E)}, {}_{V_g(\E)} \Delta ) $ is a bicovariant right $ \A $-module follows immediately from the definition of the maps $ \Delta_{V_g(\E)} $ and $ {}_{V_g(\E)} \Delta.$ So we are left with proving \eqref{29thaugust20191}. 
	Let $ e \in \E. $ Then there exist elements $ a_i $ in $ \A $ such that $ e = \sum_i \omega_i a_i. $ Hence, by \eqref{28thaugust2019night2}, we obtain
	\begin{eqnarray*}
		\Delta_{V_g (\E)} (V_g (e) ) &=&	\Delta_{V_g(\E)} (V_g(\sum_i \omega_i a_i)) = \Delta_{V_g(\E)} (\sum_{ij} g_{ij} \omega^\ast_j a_i) \\ &=& \sum_{ij} (1 \tensorc g_{ij} \omega^\ast_j) \Delta(a_{i})
		=\sum_{i} ((\id \tensorc V_g)(1 \tensorc \omega_i)) \Delta(a_i)\\ &=& \sum_{i} (\id \tensorc V_g)(\Delta_{\E}(\omega_i)) \Delta(a_i)\\
		&=& \sum_{i} (\id \tensorc V_g)\Delta_{\E}(\omega_i a_i) = (\id \tensorc V_g ) \Delta_\E (e).
	\end{eqnarray*}
	This proves the first equation of \eqref{29thaugust20191}.
	
	For the second equation, we begin by making an observation. Since $ {}_\E \Delta (\omega_i) = \sum_j \omega_j \tensorc R_{ji}, $ we have
	\begin{equation*}
	\delta_{ij} = \epsilon (R_{ij}) = m(S \tensorc \id) \Delta(R_{ij}) = \sum_k S(R_{ik}) R_{kj}.
	\end{equation*}
	Therefore, multiplying \eqref{29thaugust20193} by $S(R_{jm})$ and summing over $j$, we obtain
	\begin{equation} \label{6thnov20192} \sum_j g_{ij} S(R_{jm}) = \sum_j g_{jm} R_{ji}.\end{equation} 
	Now by using \eqref{28thaugust2019night2}, we compute
	\begin{eqnarray*}
		{}_{V_g(\E)} \Delta (V_g (e)) &=&	{}_{V_g(\E)} \Delta (V_g(\sum_i \omega_i a_i)) = {}_{V_g(\E)} \Delta (\sum_{ij} g_{ij} \omega^\ast_j a_i)\\ &=& \sum_{ij} {}_{V_g(\E)} \Delta (g_{ij} \omega^\ast_j) \Delta({a_i})
		= \sum_{ijk} g_{ij} \omega^\ast_k \tensorc S(R_{jk}) \Delta({a_i})\\ &=& \sum_{ik} \omega^\ast_k \tensorc \sum_j g_{ij} S(R_{jk}) \Delta({a_i })\\
		&=& \sum_{ik} \omega^\ast_k \tensorc \sum_j g_{jk} R_{ji} \Delta (a_i) ~ {\rm (} {\rm by} ~ \eqref{6thnov20192} {\rm)}\\
		&=& \sum_{ijk} g_{jk} \omega^\ast_k \tensorc R_{ji} \Delta (a_i)= \sum_{ij} V_g(\omega_j) \tensorc R_{ji} \Delta (a_i)\\
		&=& \sum_i (V_g \tensorc \id){}_{\E} \Delta (\omega_i) \Delta (a_i) ~ {\rm (} {\rm by} ~ \eqref{26thaugust20192} {\rm)}\\
		&=& \sum_i (V_g \tensorc \id){}_{\E} \Delta (\omega_i a_i) 	= (V_g \tensorc \id){}_{\E} \Delta (e).
	\end{eqnarray*}
	This finishes the proof.
\end{proof}

Now we recall that bicovariant right $\A$-modules (i.e, objects of the category $\rightmodbicov$) can be deformed too.

\begin{prop} \label{28thaugust20191sm} (Theorem 5.7 of \cite{schauenberg})
	Let $(M, \Delta_M, {}_M \Delta)$ be a bicovariant right $\A$-module and $\gamma$ be a 2-cocycle on $\A$. Then
	\begin{itemize} 
		\item[(i)] $M$ deforms to a bicovariant right $\A_\gamma$-module, denoted by $M_\gamma$,\\
		\item[(ii)] if $(N, \Delta_N, {}_N \Delta)$ is another bicovariant right $\A$-module and $T: M \to N$ is a bicovariant right $\A$-linear map, then the deformation $T_\gamma : M_\gamma \to N_\gamma$ is a bicovariant right $\A_\gamma$-linear map,		\item[(iii)] $T_\gamma$, as in (ii), is an isomorphism if and only if $T$ is an isomorphism.
	\end{itemize}
\end{prop}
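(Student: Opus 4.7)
The plan is to mirror the proof of Proposition \ref{8thmay20191} for bicovariant bimodules, discarding everything that concerns the left action. Concretely, on the underlying vector space of $M$ I would define a deformed right $\A_\gamma$-action by
\[
m *_\gamma a := \gamma(m_{(-1)} \tensorc a_{(1)}) \, m_{(0)} a_{(2)} \, \overline{\gamma}(m_{(1)} \tensorc a_{(3)}),
\]
which is well-defined because the compatibility \eqref{28thaugust20194} of the two coactions on $M$ lets me write $m_{(-1)} \tensorc m_{(0)} \tensorc m_{(1)}$ unambiguously. The coactions $\Delta_M$ and ${}_M \Delta$ are kept unchanged, now regarded as maps into $\A_\gamma \tensorc M$ and $M \tensorc \A_\gamma$ respectively.

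For part (i) I would first check unitality of $*_\gamma$, which is immediate from the unitality of $\gamma$ and $\overline{\gamma}$. The heart of the matter is associativity, $(m *_\gamma a) *_\gamma b = m *_\gamma (a \ast_\gamma b)$, which one verifies by expanding both sides using coassociativity together with \eqref{28thaugust20194} and then collapsing the scalar $\gamma$-factors via \eqref{(iii)} and its partner \eqref{(iiiprime)}; this is essentially the same cocycle manipulation that proves associativity of $\ast_\gamma$ on $\A_\gamma$ in \cite{doi} and of the deformed actions in Proposition \ref{8thmay20191}. Compatibility of $\Delta_M$ and ${}_M \Delta$ with $*_\gamma$ (axiom (iv) of Definition \ref{28thaugust20195}) follows by applying each coaction to $m *_\gamma a$ and using that $\gamma$ and $\overline{\gamma}$ take scalar values so that they pass through the coactions; axiom (iii) of Definition \ref{28thaugust20195} is automatic since the coactions themselves are untouched.

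For part (ii), define $T_\gamma := T$ as a $\mathbb{C}$-linear map. Bicovariance of $T_\gamma$ is literally the same statement as bicovariance of $T$, since the coactions have not been altered. Right $\A_\gamma$-linearity follows directly from the defining formula of $*_\gamma$, because $T$ commutes with both coactions and with the original right $\A$-action, hence with every ingredient appearing in $m *_\gamma a$. For part (iii), if $T$ is an isomorphism then $T^{-1}$ is also bicovariant and right $\A$-linear, and therefore deforms to a bicovariant right $\A_\gamma$-linear map $(T^{-1})_\gamma$. Functoriality of the deformation, $(S \circ T)_\gamma = S_\gamma \circ T_\gamma$, is automatic because the deformation is the identity on underlying $\mathbb{C}$-linear maps, so $(T^{-1})_\gamma$ is a two-sided $\A_\gamma$-linear inverse of $T_\gamma$. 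The converse direction of (iii) is trivial for the same reason: $T_\gamma$ is bijective as a map of sets if and only if $T$ is.

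The only real obstacle is the associativity verification in part (i); once that cocycle computation is in hand, the remainder of the proof is bookkeeping through Sweedler notation. As an alternative, one could transport the result through a Schauenburg-type equivalence between $\rightmodbicov$ and a suitable category of Yetter--Drinfeld modules, in the spirit of Remark \ref{26thjune20202} and the Bichon reference cited there, but the direct formulaic approach outlined above mirrors Proposition \ref{8thmay20191} almost verbatim and seems preferable.
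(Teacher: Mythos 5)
Your argument is correct, but it takes a different route from the paper. The paper disposes of (i) and (ii) in one line by invoking the equivalence of the categories $\leftcov$ and $\leftcovgamma$ under cocycle twisting together with the $\rightmodbicov$ analogue of the (non-monoidal part of the) second assertion of Theorem 5.7 of \cite{schauenberg} --- i.e.\ exactly the ``Schauenburg-type'' alternative you mention and set aside at the end --- and then proves (iii) exactly as you do, by deforming $T^{-1}$ and using functoriality. Your proposal instead constructs $M_\gamma$ by hand, transplanting the formula \eqref{9thmay20198} of Proposition \ref{8thmay20191} and verifying the axioms of Definition \ref{28thaugust20195} directly; this works because the formula for $m *_\gamma a$ and all the checks involve only the right action, the two coactions, their compatibility \eqref{28thaugust20194}, and the cocycle identities \eqref{(iii)} and \eqref{(iiiprime)}, so the absence of a left action is harmless, and right $\A_\gamma$-linearity of $T_\gamma$ in (ii) does follow from bicovariance plus right $\A$-linearity of $T$ as you say. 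The trade-off: your route is self-contained and makes the deformed action explicit (which is in the spirit of how the paper actually uses $M_\gamma$ later), at the cost of a genuine Sweedler/cocycle computation; the paper's route is shorter but leaves the reader to extract the right-module analogue from Schauenburg's monoidal equivalence. One small caveat: the compatibility of the coactions with $*_\gamma$ (axiom (iv)) is not quite just ``scalars pass through the coactions'' --- after applying $\Delta_M$ to $m *_\gamma a$ and comparing with $\Delta_M(m)\Delta_\gamma(a)$ computed with the deformed products, you must also cancel an adjacent pair $\overline{\gamma}(\cdot)\gamma(\cdot)$ via the convolution-inverse identity $\overline{\gamma} * \gamma = \epsilon \tensorc \epsilon$ and coassociativity; this is routine but should be recorded alongside the associativity check.
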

\begin{proof}
	Parts (i) and (ii) follow from the equivalence of categories $ \leftcov $ and $\leftcovgamma$ combined with the $\rightmodbicov$ analogue of (non-monoidal part of) the second assertion of Proposition 5.7 of \cite{schauenberg}. Part (iii) follows by noting that since the map $T$ is a bicovariant right $\A$-linear map, its inverse $T^{-1}$ is also a bicovariant right $\A$-linear map. Thus, the deformation $(T^{-1})_\gamma$ of $T^{-1}$ exists and is the inverse of the map $T_\gamma$.
\end{proof}	
As an immediate corollary, we make the following observation. 
\begin{corr} \label{28thaugust20192sm}
	Let $g$ be a bi-invariant pseudo-Riemannian metric on a bicovariant $\A$-bimodule $\E$. Then the following map is a well-defined isomorphism.
	$$(V_g)_\gamma: \E_\gamma \rightarrow (V_g(\E))_\gamma = (V_g)_{\gamma} (\E_\gamma) $$ 
\end{corr}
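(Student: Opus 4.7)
The plan is to recognise this corollary as a formal consequence of Propositions \ref{23rdmay20192}, \ref{21staugust20191}, \ref{11thjuly20192} and \ref{28thaugust20191sm}, with essentially no further calculation. The substantive work has already been carried out in Proposition \ref{21staugust20191}, where the identity \eqref{6thnov20192} was used to verify that the coactions introduced on $V_g(\E)$ via Definition \ref{26thaugust20191sm} are compatible with those on $\E$ through the map $V_g$.

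First I would note that $V_g : \E \to V_g(\E)$ is a bicovariant right $\A$-linear isomorphism in the category $\rightmodbicov$. Indeed, right $\A$-linearity and injectivity of $V_g$ are provided by part (i) of Proposition \ref{23rdmay20192}; surjectivity onto $V_g(\E)$ is tautological; and bicovariance (after endowing $V_g(\E)$ with the bicovariant right $\A$-module structure of Definition \ref{26thaugust20191sm}) is exactly Proposition \ref{21staugust20191}. Thus all hypotheses needed to deform $V_g$ as a morphism in $\rightmodbicov$ are in place.

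Next I would invoke Proposition \ref{28thaugust20191sm}: part (ii) says that the deformed map $(V_g)_\gamma : \E_\gamma \to (V_g(\E))_\gamma$ is bicovariant and right $\A_\gamma$-linear, and part (iii) says that since $V_g$ is an isomorphism so is $(V_g)_\gamma$. Finally, the set-theoretic equality $(V_g(\E))_\gamma = (V_g)_\gamma(\E_\gamma)$ is immediate from Proposition \ref{11thjuly20192}: the deformation $(V_g)_\gamma$ coincides with $V_g$ as a $\IC$-linear map and $\E_\gamma = \E$ as a $\IC$-vector space, so the image is unaltered by deformation. I do not anticipate any real obstacle; the only conceptual subtlety, already isolated by the authors, was the need to work in the category $\rightmodbicov$ rather than in $\bimodbicov$, owing to the fact that $V_g$ is not left $\A$-linear and so $V_g(\E)$ is only a right $\A$-module rather than a bimodule.
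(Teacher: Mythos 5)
Your proposal is correct and follows essentially the same route as the paper: deform $V_g$ as a morphism of bicovariant right $\A$-modules using Proposition \ref{21staugust20191} together with Proposition \ref{28thaugust20191sm}, parts (ii) and (iii), with injectivity of $V_g$ coming from the non-degeneracy condition of Definition \ref{24thmay20191}. Your extra remark identifying $(V_g(\E))_\gamma$ with $(V_g)_\gamma(\E_\gamma)$ via the equality of deformed and undeformed maps as $\IC$-linear maps is a harmless elaboration of what the paper leaves implicit.
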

\begin{proof}
	Since both $\E$ and $V_g(\E)$ are bicovariant right $\A$-modules, and $V_g$ is a right $\A$-linear bicovariant map (Proposition \ref{21staugust20191}), Proposition \ref{28thaugust20191sm} guarantees the existence of $(V_g)_\gamma$. Since $g$ is a pseudo-Riemannian metric, by (ii) of Definition \ref{24thmay20191}, $V_g: \E \to V_g(\E)$ is an isomorphism. Then, by (iii) of Proposition \ref{28thaugust20191sm}, $(V_g)_\gamma$ is also an isomorphism.
\end{proof}
Now we are in a position to state and prove the main result of this section which shows that there is an abundant supply of bi-invariant pseudo-Riemannian metrics on $ \E_\gamma.$ Since $ g $ is a map from $ \E \tensora \E $ to $ \A, $ $ g_\gamma $ is a map from $ (\E \tensora \E)_\gamma $ to $ \A_\gamma. $ But we have the isomorphism $ \xi $ from $ \E_\gamma \otimes_{\A_\gamma} \E_\gamma $ to $ (\E \tensora \E)_\gamma $ (Proposition \ref{11thjuly20191}). As in Subsection \ref{18thseptember20192}, we will make an abuse of notation to denote the map $ g_\gamma \xi^{-1} $ by the symbol $ g_\gamma. $ 

\begin{thm} \label{29thaugust20191sm}
	If $g$ is a bi-invariant pseudo-Riemannian metric on a finite bicovariant $\A$-bimodule $\E$ (as in Remark \ref{20thjune}) and $\gamma$ is a 2-cocycle on $\A$, then $g$ deforms to a right $\A_\gamma$-linear map $g_\gamma$ from $\E_\gamma \otimes_{\A_\gamma} \E_\gamma$ to itself. Moreover, $g_\gamma$ is a bi-invariant pseudo-Riemannian metric on $\E_\gamma$. Finally, any bi-invariant pseudo-Riemannian metric on $\E_\gamma$ is a deformation (in the above sense) of some bi-invariant pseudo-Riemannian metric on $\E$.
\end{thm}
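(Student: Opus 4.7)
The plan is to verify, in order, the three defining conditions for $g_\gamma$ to be a bi-invariant pseudo-Riemannian metric on $\E_\gamma$ (right $\A_\gamma$-linearity, the symmetry $g_\gamma \circ \sigma_\gamma = g_\gamma$, and non-degeneracy), and then to invert the correspondence using the inverse cocycle $\overline{\gamma}$.

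Since $g$ is bi-invariant, Proposition \ref{29thaugust20192} tells us that $g:\E\tensora\E\to\A$ is a bicovariant right $\A$-linear map. Applying part (ii) of Proposition \ref{11thjuly20192} gives a bicovariant right $\A_\gamma$-linear map $g_\gamma:(\E\tensora\E)_\gamma\to\A_\gamma$, and pre-composing with $\xi^{-1}$ from Proposition \ref{11thjuly20191} (under the abuse of notation announced immediately before the theorem) produces the required map $g_\gamma:\E_\gamma\otimes_{\A_\gamma}\E_\gamma\to\A_\gamma$. The symmetry $g_\gamma\circ\sigma_\gamma=g_\gamma$ is then immediate: starting from $g\circ\sigma=g$ and applying part (iii) of Proposition \ref{11thjuly20192}, we get $(g\circ\sigma)_\gamma = g_\gamma\circ\sigma_\gamma = g_\gamma$ as maps $(\E\tensora\E)_\gamma\to\A_\gamma$, and Theorem \ref{28thaugust20196} identifies this $\sigma_\gamma$ (after transport by $\xi$) with the Woronowicz braiding of $\E_\gamma$. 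Finally, bicovariance of $g$ passes to bicovariance of $g_\gamma$ by part (i) of Proposition \ref{11thjuly20192}, and Proposition \ref{29thaugust20192} applied inside $\bimodbicovgamma$ yields the bi-invariance of $g_\gamma$.

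The main obstacle is the non-degeneracy condition (ii) of Definition \ref{24thmay20191}. My plan is to identify $V_{g_\gamma}$ with the deformation $(V_g)_\gamma$ provided by Corollary \ref{28thaugust20192sm}, which is already known to be an isomorphism. The evaluation pairing $\ev:V_g(\E)\tensora\E\to\A$, $\phi\tensora e\mapsto\phi(e)$, is bicovariant and right $\A$-linear (in view of Proposition \ref{21staugust20191}), hence deforms by Proposition \ref{11thjuly20192} to a bicovariant right $\A_\gamma$-linear pairing $\ev_\gamma:(V_g(\E))_\gamma\otimes_{\A_\gamma}\E_\gamma\to\A_\gamma$. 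Using the explicit formulas \eqref{9thmay20197}, \eqref{9thmay20198} for the deformed actions together with the formula for $\xi^{-1}$, and cancelling the appropriate $\gamma\overline{\gamma}$-factors using the cocycle identities \eqref{(iii)}, \eqref{(iiiprime)}, one verifies that
\[ \ev_\gamma\bigl((V_g)_\gamma(e)\otimes_{\A_\gamma} f\bigr) \;=\; g_\gamma(e\otimes_{\A_\gamma}f) \qquad \text{for all } e,f\in\E_\gamma. \]
Hence $V_{g_\gamma}$ coincides with $(V_g)_\gamma$ under the embedding of $(V_g(\E))_\gamma$ into $\Hom_{\A_\gamma}(\E_\gamma,\A_\gamma)$ supplied by $\ev_\gamma$, and is therefore injective; condition (ii) of Definition \ref{24thmay20191} follows from Proposition \ref{23rdmay20192}(v) applied to $\E_\gamma$.

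For the final assertion, recall the standard fact that $\overline{\gamma}$ is a $2$-cocycle on $\A_\gamma$ with $(\A_\gamma)_{\overline{\gamma}}=\A$, and that cocycle deformation of bicovariant bimodules is similarly inverted: $(\E_\gamma)_{\overline{\gamma}}=\E$. Given any bi-invariant pseudo-Riemannian metric $h$ on $\E_\gamma$, the first part of the theorem applied to the pair $(\E_\gamma,\overline{\gamma})$ yields a bi-invariant pseudo-Riemannian metric $h_{\overline{\gamma}}$ on $\E$. Iterating and using the compatibility of $\xi$ with $\xi^{-1}$ (so that composing the twists by $\gamma$ and $\overline{\gamma}$ gives the identity on the relevant bicovariant bimodules) gives $(h_{\overline{\gamma}})_\gamma=h$, and symmetrically $(g_\gamma)_{\overline{\gamma}}=g$, establishing the claimed bijection.
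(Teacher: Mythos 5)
Your skeleton matches the paper's for most of the statement: existence and right $\A_\gamma$-linearity of $g_\gamma$, the identity $g_\gamma\circ\sigma_\gamma=g_\gamma$ via part (iii) of Proposition \ref{11thjuly20192} and Theorem \ref{28thaugust20196}, bi-invariance via Proposition \ref{29thaugust20192}, the key identification $V_{g_\gamma}=(V_g)_\gamma$ combined with Corollary \ref{28thaugust20192sm} for non-degeneracy, and the inverse cocycle for the final assertion. The genuine gap is in how you justify that identification. You propose to deform the evaluation pairing $\ev\colon V_g(\E)\tensora\E\to\A$ by citing Proposition \ref{11thjuly20192}, implicitly together with an isomorphism $(V_g(\E)\tensora\E)_\gamma\cong (V_g(\E))_\gamma\otimes_{\A_\gamma}\E_\gamma$. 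But $V_g(\E)$ is only a bicovariant \emph{right} $\A$-module, not a bicovariant bimodule --- this is exactly the difficulty the paper emphasizes, since $g$ (hence $V_g$) is not left $\A$-linear --- so neither Proposition \ref{11thjuly20192} nor Proposition \ref{11thjuly20191} applies to $\ev$ or to its domain, and Proposition \ref{28thaugust20191sm} deforms objects and morphisms of $\rightmodbicov$ but provides no $\xi$-type monoidal compatibility for a tensor product of a bicovariant right module with a bicovariant bimodule. You would first have to prove such a mixed analogue of Proposition \ref{11thjuly20191}; you would also owe the computation behind ``one verifies that $\ev_\gamma((V_g)_\gamma(e)\otimes_{\A_\gamma}f)=g_\gamma(e\otimes_{\A_\gamma}f)$'' (this is where the real work lies), and you would additionally need that $\phi\mapsto\ev_\gamma(\phi\otimes_{\A_\gamma}\cdot\,)$ is injective on $(V_g(\E))_\gamma$, i.e.\ that your ``embedding supplied by $\ev_\gamma$'' really is one; none of these is established by the results you cite.

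The paper closes this step without the extra machinery: it evaluates $(V_g)_\gamma(\omega)$ and $V_{g_\gamma}(\omega)$ on $\omega\in\zeroE$, $\eta\in\Ezero$, where the explicit formula for $\xi^{-1}$ collapses by the normalization of $\gamma$ and $\overline{\gamma}$ to give $(V_g)_\gamma(\omega)(\eta)=g(\omega\tensora\eta)=g_\gamma(\omega\otimes_{\A_\gamma}\eta)=V_{g_\gamma}(\omega)(\eta)$, and then extends to all of $\E_\gamma$ using right $\A_\gamma$-linearity together with the right totality of $\Ezero$ and of $\zeroE$; Corollary \ref{28thaugust20192sm} then makes $V_{g_\gamma}=(V_g)_\gamma$ an isomorphism onto its image. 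Replacing your $\ev_\gamma$ detour by this invariant-element computation repairs the argument. Two smaller points: injectivity of $V_{g_\gamma}$ \emph{is} condition (ii) of Definition \ref{24thmay20191}, so your appeal to Proposition \ref{23rdmay20192}(v) ``applied to $\E_\gamma$'' is both unnecessary and circular (part (v) presupposes a metric already satisfying (ii)); and your last paragraph is fine and agrees with the paper, provided you note, as you do, that $\overline{\gamma}$ is a cocycle on $\A_\gamma$ rather than on $\A$.
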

\begin{proof} 
	Since $g$ is a right $\A$-linear bicovariant map (Proposition \ref{29thaugust20192}), $g$ indeed deforms to a right $\A_\gamma$-linear map $g_\gamma$ from $(\E \tensora \E)_\gamma \cong \E_\gamma \otimes_{\A_\gamma} \E_\gamma$ (see Proposition \ref{11thjuly20191}) to $\A_\gamma$. The second assertion of Proposition \ref{11thjuly20192} implies that $g_\gamma$ is bicovariant. Then Proposition \ref{29thaugust20192} implies that $g_\gamma$ is bi-invariant. Since $g \circ \sigma = g$, part (iii) of Proposition \ref{11thjuly20192} implies that $$ g_\gamma = (g \circ \sigma)_\gamma = g_\gamma \circ \sigma_\gamma. $$ This verifies condition (i) of Definition \ref{24thmay20191}\\
	Next, we prove that $g_\gamma$ satisfies (ii) of Definition \ref{24thmay20191}. Let $\omega$ be an element of $\zeroE = {}_0(\E_\gamma)$ and $\eta$ be an element of $\Ezero = (\E_\gamma)_0$. Then we have
	\begin{equation*}
	\begin{aligned}
	&(V_g)_\gamma(\omega)(\eta) = (V_g(\omega))_\gamma(\eta) = V_g(\omega)(\eta)\\
	=& g(\omega \tensora \eta) = g_\gamma(\overline{\gamma}(1 \tensorc \eta_{(-1)}) \omega_{(0)} \otimes_{\A_\gamma} \eta_{(0)} \gamma(\omega_{(1)} \tensorc 1))\\
	& {\rm (} {\rm by} ~ {\rm the} ~ {\rm definition} ~ {\rm of} ~ \xi^{-1} ~ {\rm in} ~ {\rm Proposition} \ \ref{11thjuly20191} {\rm)}\\
	=& g_\gamma(\epsilon(\eta_{(-1)}) \omega_{(0)} \otimes_{\A_\gamma} \eta_{(0)} \epsilon(\omega_{(1)})) = g_\gamma(\omega \otimes_{\A_\gamma} \eta) = V_{g_\gamma}(\omega)(\eta).
	\end{aligned}
	\end{equation*}
	Then, by the right-$\A_\gamma$ linearity of $(V_g)_\gamma(\omega)$ and $V_{g_\gamma}(\omega)$, we get, for all $a$ in $\A$,
	\begin{equation*}
	V_{g_\gamma}(\omega)(\eta *_\gamma a) = V_{g_\gamma}(\omega)(\eta)*_\gamma a = (V_g)_\gamma(\omega)(\eta)*_\gamma a = (V_g)_\gamma(\omega)(\eta *_\gamma a).
	\end{equation*}
	Therefore, by the right $\A$-totality of $(\E_\gamma)_0 = \E_0$ in $\E_\gamma$, we conclude that the maps $(V_g)_\gamma$ and $V_{g_\gamma}$ agree on ${}_0(\E_\gamma)$. But since ${}_0(\E_\gamma) = \zeroE$ is right $\A_\gamma$-total in $\E_\gamma$ and both $V_{g_\gamma}$ and $ (V_{g})_\gamma$ are right-$\A_\gamma$ linear, $(V_g)_\gamma = V_{g_\gamma}$ on the whole of $\E_\gamma$.\\
	Next, since $V_g$ is a right $\A$-linear isomorphism from $\E$ to $V_g(\E)$, hence by Corollary \ref{28thaugust20192sm}, $(V_g)_\gamma$ is an isomorphism onto $ (V_g (\E))_\gamma = (V_g)_\gamma (\E_\gamma)=V_{g_\gamma}(\E_\gamma).$ Therefore $V_{g_\gamma}$ is an isomorphism from $\E_\gamma$ to $V_{g_\gamma}(\E_\gamma)$. Hence $g_\gamma$ satisfies (ii) of Definition \ref{24thmay20191}.\\
	To show that every pseudo-Riemannian metric on $\E_\gamma$ is obtained as a deformation of a pseudo-Riemannian metric on $\E$, we view $\E$ as a cocycle deformation of $\E_\gamma$ under the cocycle $\gamma^{-1}$. Then given a pseudo-Riemannian metric $g^\prime$ on $\E_\gamma$, by the first part of this proof, $(g^\prime)_{\gamma^{-1}}$ is a bi-invariant pseudo-Riemannian metric on $\E$. Hence, $g^\prime = ((g^\prime)_{\gamma^{-1}})_{\gamma}$ is indeed a deformation of the bi-invariant pseudo-Riemannian metric $(g^\prime)_{\gamma^{-1}}$ on $\E$.
\end{proof}	

\begin{rmk} \label{26thjune20201}
	We have actually used the fact that $\E$ is finite in order to prove Theorem \ref{29thaugust20191sm}. Indeed, since $\E$ is finite, we can use the results of Section \ref{section3} to derive Proposition \ref{21staugust20191}
	which is then used to prove Corollary  \ref{28thaugust20192sm}. Finally, Corollary \ref{28thaugust20192sm} is used to prove Theorem \ref{29thaugust20191sm}.
	
	Also note that the proof of Theorem \ref{29thaugust20191sm} also implies that the maps $(V_g)_\gamma$ and $V_{g_\gamma}$ are equal.
\end{rmk}

When $g$ is a pseudo-Riemannian bicovariant bilinear metric on $\E,$ then we have a much shorter proof of the fact that $g_\gamma$ is a pseudo-Riemannian metric on $\E_\gamma$ which avoids bicovariant right $\A$-modules. We learnt the proof of this fact from communications with the referee and is as follows:
We will work in the categories $\bimodbicov$ and $\bimodbicovgamma.$ Firstly, as $g$ is bilinear, $V_g$ is a morphism of the category $\bimodbicov.$ and can be deformed to a bicovariant $\A_\gamma$-bilinear map $ ( V_g )_\gamma $ from   $ \E_\gamma  $ to $ (  \E^* )_\gamma. $ Similarly, $g$ deforms to a $\A_\gamma$-bilinear map from $ \E_\gamma \otimes_{\A_\gamma} \E_\gamma $ to $\A_\gamma.$ Then as in the proof of Theorem \ref{29thaugust20191sm}, we can easily check that $ ( V_g )_\gamma = V_{g_\gamma}. $

On the other hand, it is well-known that the left dual $\widetilde{\E}$ of $\E$ is isomorphic to $\E^*.$  Since $g$ is bilinear, Proposition \ref{25thjune202} implies that the morphism $V_g$ (in the category $ \bimodbicov $) is an isomorphism from $\E$ to $\E^*.$

Therefore, we have an isomorphism $ ( V_g )_\gamma $ is an isomorphism from $\E_\gamma$ to $ ( \E^* )_\gamma  \cong  ( \E_\gamma )^* $ by Exercise 2.10.6 of \cite{etingof}.  As $ ( V_g )_\gamma = V_{g_\gamma}, $ we deduce that $ V_{g_\gamma} $ is an isomorphism from $\E_\gamma$ to $ (  \E_\gamma )^*. $ Since the equation $ g_\gamma \circ \sigma_\gamma = g_\gamma, $ this completes the proof.

\vspace{4mm}

{\bf Acknowledgement:} We are immensely grateful to the referee for clarifying the connections with monoidal categories and their deformations. His/her enlightening remarks has greatly improved our understanding. We would like to thank the referee for pointing out the relevant results in \cite{majidcocycle}, \cite{schauenberg} and \cite{bichon}. Proposition 3.5, Remark \ref{26thjune20202}, 
Remark \ref{26thjune20201} and the discussion at the very end of the article are due to the referee.

\end{document}